\newtheorem{theorem}{Theorem}[section]
\newtheorem{lemma}[theorem]{Lemma}
\newtheorem{corollary}[theorem]{Corollary}
\theoremstyle{definition}
\theoremstyle{remark}
\numberwithin{equation}{section}
\begin{document}
\setcounter{page}{1}

\title[Jordan derivations on some algebras]{Characterizations of Jordan derivations on algebras of locally measurable operators}

\author[G. An]{Guangyu An}

\address{Department of Mathematics, Shaanxi University of Science and Technology,
 Xi'an 710021, China.}
\email{\textcolor[rgb]{0.00,0.00,0.84}{anguangyu310@163.com}}

\author[J. He]{Jun He}
\address{Department of Mathematics and Physics, Anhui Polytechnic University,
Wuhu 241000, China.}
\email{\textcolor[rgb]{0.00,0.00,0.84}{15121034934@163.com}}

\subjclass[2010]{46L57, 46L51, 46L52.}

\keywords{Jordan derivation, von Neumann algebra, locally measurable operators algebra, continuous.}

\date{Received: xxxxxx; Revised: yyyyyy; Accepted: zzzzzz.
\newline \indent $^{*}$ Corresponding author}

\begin{abstract}
We prove that if $\mathcal M$ is a properly
infinite von Neumann algebra and $LS(\mathcal M)$ is the local measurable operator algebra
affiliated with $\mathcal M$,
then every Jordan derivation from $LS(\mathcal M)$ into itself is continuous with respect to the local measure topology $t(\mathcal M)$.
We construct an extension of a Jordan derivation
from $\mathcal M$ into $LS(\mathcal M)$ up to a Jordan
derivation from $LS(\mathcal M)$ into itself. Moreover, we prove that
if $\mathcal M$ is a properly von Neumann algebra
and $\mathcal A$ is a subalgebra of $LS(\mathcal M)$
such that $\mathcal M\subset\mathcal A$, then every Jordan derivation
from $\mathcal A$ into $LS(\mathcal M)$ is continuous with respect to the local
measure topology $t(\mathcal M)$.
\end{abstract}\maketitle

\section{Introduction}

Let $\mathcal{R}$ be an associative ring.
For an integer $n\geq2$, $\mathcal{R}$ is said to be \emph{$n$-torsion-free}
if $na=0$ implies that $a=0$ for every $a$ in $\mathcal{R}$. Recall that a ring $\mathcal{R}$
is \emph{prime} if $a\mathcal{R}b=(0)$ implies that either $a=0$ or $b=0$ for each $a$ and $b$ in $\mathcal{R}$; and is \emph{semiprime} if $a\mathcal{R}a=(0)$ implies that
$a=0$ for every $a$ in $\mathcal{R}$.
For each $a$ and $b$ in $\mathcal{R}$, we denote by $[a,b]=ab-ba$.

Suppose that $\mathcal{M}$ is a $\mathcal{R}$-bimodule.
An additive mapping $\delta$ from $\mathcal{R}$ into $\mathcal{M}$ is
called a \emph{derivation} if $\delta(ab)=\delta(a)b+a\delta(b)$ for each $a$ and $b$ in $\mathcal{R}$;
$\delta$ is called a \emph{Jordan derivation} if $\delta(a^{2})=\delta(a)a+a\delta(a)$
for every $a$ in $\mathcal{R}$; and $\delta$ is called an \emph{inner derivation} if there
exists an element $m$ in $\mathcal{M}$, such that
$\delta(a)=[m,a]$ for every $a$ in $\mathcal{R}$.
Obviously, every derivation is a Jordan derivation
and every inner derivation is a derivation. The converse is, in general, not true.

A classical result of Herstein \cite{I. Herstein} proves that every Jordan derivation
on a 2-torsion-free prime ring is a derivation; in \cite{M. Bresar J. Vukman1},
Bre\v{s}ar and Vukman give a brief proof of \cite[Theorem 3.1]{I. Herstein}.
In \cite{Cusack}, Cusack generalizes \cite[Theorem 3.1]{I. Herstein} to
2-torsion-free semiprime rings; in \cite{M. Bresar2}, Bre\v{s}ar gives an
alternative proof of \cite[Corollary 5]{Cusack}.
Let $\mathcal A$ be a $C^*$-algebra and $\mathcal M$ be a Banach $\mathcal A$-bimodule,
in \cite{Ringrose}, Ringrose shows that every derivation from $\mathcal A$ into $\mathcal M$ is continuous;
in \cite{hejazian}, Hejazian and Niknam prove that if
$\mathcal A$ is commutative, then every Jordan derivation from $\mathcal A$ into $\mathcal M$ is continuous;
in \cite{Johnson}, Johnson shows that every continuous Jordan derivation
from $\mathcal A$ into $\mathcal M$ is a derivation;
by Cunts \cite[Corollary 1.2]{Cuntz} and Johnson \cite[Theorem 6.3]{Johnson},
we know that every Jordan derivation from $\mathcal A$ into $\mathcal M$ is a derivation.
In \cite{sakai}, Sakai proves that every derivation on a von Neumann algebra is an inner derivation.

Compare with the characterizations of derivations on Banach algebras, investigation of
derivations on  unbounded operator algebras begin much later.

In \cite{Segal}, Segal studies the theory of noncommutative integration,
and introduces various classes of non-trivial $*$-algebras of unbounded
operators. In this paper, we mainly consider
the $*$-algebra $S(\mathcal M)$ of all measurable operators
and the $*$-algebra $LS(\mathcal M)$ of all locally measurable operators
affiliated with a von Neumann algebra $\mathcal M$.
In \cite{Segal}, Segal shows that the algebraic and topological properties
of the measurable operators algebra $S(\mathcal M)$ are similar to the von Neumann algebra $\mathcal M$.
If $\mathcal M$ is a commutative von Neumann algebra,
then $\mathcal M$ is $*$-isomorphic to the algebra
$L^\infty(\Omega,\Sigma,\mu)$ of all essentially bounded measurable
complex functions on a measure space $(\Omega,\Sigma,\mu)$; and
$S(\mathcal M)$ is $*$-isomorphic to the algebra $L^0(\Omega,\Sigma,\mu)$
of all measurable almost everywhere finite complex-valued functions on
$(\Omega,\Sigma,\mu)$. In \cite{Ber1}, Ber, Chilin and Sukochev show that there exists a derivation on $L^0(0,1)$
is not an inner derivation, and the derivation is discontinuous in the measure topology.
This result means that the properties of derivations on $S(\mathcal M)$
are different from the derivations
on $\mathcal M$.

In \cite{Albeverio1, Albeverio2}, Albeverio, Ayupov and Kudaybergenov study
the properties of derivations on various classes of measurable algebras.
If $\mathcal M$ is a type I von Neumann algebra,
in \cite{Albeverio1}, the authors prove that every derivation
on $LS(\mathcal M)$ is an inner derivation if and only if it is $\mathcal Z(\mathcal M)$ linear;
in \cite{Albeverio2}, the authors give the decomposition form of derivations on $S(\mathcal M)$
and $LS(\mathcal M)$; they also prove that if $\mathcal M$ is a type $\mathrm{I}_\infty$ von Neumann algebra,
then every derivation on $S(\mathcal M)$ or $LS(\mathcal M)$ is an inner derivation.
If $\mathcal M$ is a properly infinite von Neumann algebra,
in \cite{Ber3}, Ber, Chilin and Sukochev prove that
every derivation on $LS(\mathcal M)$ is continuous with respect
to the local measure topology $t(\mathcal M)$; and in \cite{Ber2}, the authors
show that every derivation on $LS(\mathcal M)$ is an inner derivation.

Similar to the derivations and Jordan derivations, in \cite{M. Bresar J. Vukman},
M. Bre\v{s}ar and J. Vukman introduce the concepts of left derivations and Jordan left derivations.

Let $\mathcal{R}$ be a ring and $\mathcal{M}$ be a left $\mathcal{R}$-module.
A linear mapping $\delta$ from $\mathcal{R}$ into $\mathcal{M}$ is
called a \emph{left derivation} if
$\delta(ab)=a\delta(b)+b\delta(a)$
for each $a,b$ in $\mathcal{R}$; and $\delta$ is called a \emph{Jordan left derivation}
if $\delta(a^{2})=2a\delta(a)$ for every $a$ in $\mathcal{R}$.

In \cite{J. Vukman1}, Vukman shows that every Jordan left derivation from a
complex semisimple Banach algebra into itself is zero.
In \cite{An}, we prove that every Jordan left derivation from a $C^*$-algebra into its Banach left module is zero.

If $\mathcal M$ is a type $\mathrm{I}_n$ von Neumann algebra,
in \cite{Albeverio2}, Albeverio, Ayupov and Kudaybergenov show
that $S(\mathcal M)$ (resp. $LS(\mathcal M)$) is $*$-isomorphic to a
matrix algebra, by \cite[Theorem 3.1]{Alizadeh}, we know that
every Jordan derivation on $S(\mathcal M)$ (resp. $LS(\mathcal M)$) is a derivation.
If $\mathcal M$ is a type $\mathrm{III}$ von Neumann algebra, in \cite{M. Muratov},
Murator and Chilin show that $S(\mathcal M)=\mathcal M$, then every Jordan derivation on $S(\mathcal M)$
is an inner derivation.

This paper is organized as follows. In Section 2, we introduce some notations
and recall the definitions of measurable operator algebras and local measurable operator algebras.

In Section 3, we prove that if $\mathcal M$ is a properly
infinite von Neumann algebra, then every Jordan derivation
from $LS(\mathcal M)$ into itself is continuous with respect
to the local measure topology $t(\mathcal M)$.

In Section 4, we construct an extension of a Jordan derivation
from $\mathcal M$ into $LS(\mathcal M)$ up to a Jordan
derivation from $LS(\mathcal M)$ into itself. Moreover, we prove that
if $\mathcal M$ is a properly von Neumann algebra
and $\mathcal A$ is a subalgebra of $LS(\mathcal M)$
such that $\mathcal M\subset\mathcal A$, then every Jordan derivation
from $\mathcal A$ into $LS(\mathcal M)$ is continuous with respect to the local
measure topology $t(\mathcal M)$.

\section{Preliminaries}
Let $\mathcal H$ be a complex Hilbert space
and $B(\mathcal H)$ be the algebra of all bounded linear operators on $\mathcal H$.
Suppose that $\mathcal{M}$ is a von Neumann algebra on
$\mathcal{H}$ and
$\mathcal Z(\mathcal M)=\mathcal M\cap\mathcal M'$
is the center of $\mathcal M$, where
$$\mathcal M'=\{a\in B(\mathcal H):ab=ba~\mathrm{for~every}~b~\mathrm{in}~\mathcal M\}.$$
Denote by
$\mathcal P(\mathcal M)=\{p\in\mathcal M:p=p^*=p^2\}$
the lattice of all projections in $\mathcal M$ and by $\mathcal P_{fin}(\mathcal M)$
the set of all finite projections in $\mathcal M$. For each $p$ and $q$
in $\mathcal P(\mathcal M)$, if we define the
inclusion relation $p\subset q$ by $p\leq q$, then
$\mathcal P(\mathcal M)$ is a complete lattice. Suppose that
$\{p_l\}_{l\in\lambda}$ is a family of projections in $\mathcal M$, we denote by
$${\sup_{{l\in\lambda}}}p_l=\overline{\bigcup_{l\in\lambda} p_l\mathcal H}~~\mathrm{and}~~
{\inf_{{l\in\lambda}}}p_l=\bigcap_{l\in\lambda} p_l\mathcal H,$$
if $\{p_l\}_{l\in\lambda}$
is an orthogonal family of projections in $\mathcal M$, then we have that
$${\sup_{{l\in\lambda}}}p_l=\sum_{l\in\lambda}p_l.$$

Let $x$ be a closed densely defined linear operator on $\mathcal H$
with the domain $\mathcal D(x)$, where $\mathcal D(x)$ is a linear subspace of $\mathcal H$.
$x$ is said to be \emph{affiliated} with $\mathcal M$,
denote by $x\eta\mathcal M$, if
$u^*xu=x$ for every unitary element $u$ in $\mathcal{M}'$.

A closed densely defined linear operator $x\eta\mathcal M$
is said to be \emph{measurable} with respect to
$\mathcal M$, if there exists a sequence $\{p_n\}_{n=1}^{\infty}\subset\mathcal P(\mathcal M)$ such that $p_n\uparrow1$,
$p_n(\mathcal H)\subset\mathcal D(x)$ and $p_n^{\bot}=1-p_n\in \mathcal P_{fin}(\mathcal M)$ for every $n\in\mathbb{N}$,
where $\mathbb{N}$ is the set of all natural numbers. Denote by $S(\mathcal M)$ the set of all measurable operators
affiliated with the von Neumann algebra $\mathcal M$.

A closed densely defined linear operator $x\eta\mathcal M$
 is said to be \emph{locally measurable} with respect to
$\mathcal M$, if there exists a sequence $\{z_n\}_{n=1}^{\infty}\subset\mathcal P(\mathcal Z(\mathcal M))$
such that $z_n\uparrow1$ and $z_nx\in S(\mathcal M)$ for every $n\in\mathbb{N}$.
Denote by $LS(\mathcal M)$ the set of all locally measurable operators
affiliated with the von Neumann algebra $\mathcal M$.

In \cite{M. Muratov}, Murator and Chilin prove that $S(\mathcal M)$ and $LS(\mathcal M)$
are both unital $*$-algebras and $\mathcal M\subset S(\mathcal M)\subset LS(\mathcal M)$;
the authors also show that if $\mathcal M$ is a finite von Neumann algebra
or $\mathrm{dim}(\mathcal Z(\mathcal M))<\infty$, then
$S(\mathcal M)=LS(\mathcal M)$; if $\mathcal M$ is a type III von Neumann algebra and
$\mathrm{dim}(\mathcal Z(\mathcal M))=\infty$, then $S(\mathcal M)=\mathcal M$ and
$LS(\mathcal M)\neq\mathcal M$.

Let $E$ be a subset of $LS(\mathcal M)$. Denote by $E_h$ and $E_+$
the sets of all self-adjoint elements and all positive elements
in $E$, respectively. The partial order in $LS(\mathcal M)$
is defined by its cone $LS_+(\mathcal M)$ and is denoted by $\leq$.

Suppose that $x$ is a closed operator with a dense domain $\mathcal D(x)$ in $\mathcal H$.
Let $x=u|x|$ be the polar decomposition of $x$, where $|x|=(x^*x)^{\frac{1}{2}}$ and
$u$ is a partial isometry in $B(\mathcal H)$. Denote by $l(x)=uu^*$ the
\emph{left support} of $x$ and by $r(x)=u^*u$ the
\emph{right support} of $x$, clearly, $l(x)\sim u(x)$.
In \cite{M. Muratov},  Muratov and Chilin show that $x\in S(\mathcal M)$ (resp. $x\in LS(\mathcal M)$)
if and only if $|x|\in S(\mathcal M)$ (resp. $|x|\in LS(\mathcal M)$)
and $u\in\mathcal M$. If $x$ is a self-adjoint operator affiliated with $\mathcal M$,
then the spectral family of projections $\{E_{\lambda}(x)\}_{\lambda\in\mathbb{R}}$
for $x$ belongs to $\mathcal M$. In \cite{M. Muratov}, the authors also prove that a locally measurable operator
$x$ is measurable if and only if there exists $\lambda>0$ such that $E_\lambda^\bot(|x|)$
is a finite projection in $\mathcal M$.

In the following, we recall the definition of the local
measure topology.
Let $\mathcal M$ be a commutative von Neumann algebra,
in \cite{M. Takesaki}, Takesaki proves that there exists a $*$-isomorphism from
$\mathcal M$ onto the $*$-algebra
$L^\infty(\Omega,\Sigma,\mu)$, where $\mu$ is a measure
satisfying the direct sum property. The direct sum property
means that the Boolean algebra of all projections
in $L^\infty(\Omega,\Sigma,\mu)$ is total order, and for every
non-zero projection $p$ in $\mathcal M$, there exists a non-zero projection $q\leq p$
with $\mu(q)<\infty$. Consider $LS(\mathcal M)=S(\mathcal M)=L^0(\Omega,\Sigma,\mu)$
of all measurable almost everywhere finite complex-valued functions on $(\Omega,\Sigma,\mu)$.
Define the local measure topology $t(L^\infty(\Omega))$ on $L^0(\Omega,\Sigma,\mu)$, that is,
the Hausdorff vector topology, whose base of neighborhoods of zero is given by
\begin{align*}
W(B,\varepsilon,\delta)=&\{f\in L^0(\Omega,\Sigma,\mu): \mathrm{there~exists~a~set}~E\in\Sigma~\mathrm{such~that}\\
&E\subset B, \mu(B\backslash E)\leq\delta,f_{\chi_{E}}\in L^\infty(\Omega,\Sigma,\mu),\|f_{\chi_{E}}\|_{L^\infty(\Omega,\Sigma,\mu)}\leq\varepsilon\},
\end{align*}
where $\varepsilon,\delta>0,B\in\Sigma,\mu(B)<\infty$ and $\chi_{E}(\omega)=1$ when $\omega\in E$, $\chi_{E}(\omega)=0$ when $\omega\notin E$.
Suppose that $\{f_\alpha\}\subset L^0(\Omega,\Sigma,\mu)$ and $f\in L^0(\Omega,\Sigma,\mu)$, if
$f_\alpha\chi_B\rightarrow f\chi_B$ in the measure $\mu$ for every $B\in\Sigma$ with $\mu(B)<\infty$,
then we denote by
$f_\alpha\xrightarrow{t(L^\infty(\Omega))}f$. In \cite{F. Yeadon}, Yeadon show the topology
$t(L^\infty(\Omega))$ dose not change if the measure $\mu$ is replaced with an equivalent measure.

If $\mathcal M$ is an arbitrary von Neumann algebra
and $\mathcal Z(\mathcal M)$
is the center of $\mathcal M$, then
there exists a $*$-isomorphism $\varphi$ from
$\mathcal Z(\mathcal M)$ onto the $*$-algebra
$L^\infty(\Omega,\Sigma,\mu)$, where $\mu$ is a measure
satisfying the direct sum property.
Denote by $L^+(\Omega,\Sigma,\mu)$
the set of all measurable real-valued positive functions on $(\Omega,\Sigma,\mu)$.
In \cite{Segal}, Segal shows that there exists a mapping
$\Delta$ from $\mathcal P(\mathcal M)$ into $L^+(\Omega,\Sigma,\mu)$ satisfying the following conditions:\\
($\mathbb{D}_{1}$) $\Delta(p)\in L_+^0(\Omega,\Sigma,\mu)$ if and only if $p\in \mathcal P_{fin}(\mathcal M)$;\\
($\mathbb{D}_{2}$) $\Delta(p\vee q)=\Delta(p)+\Delta(q)$ if $pq=0$;\\
($\mathbb{D}_{3}$) $\Delta(u^*u)=\Delta(uu^*)$ for every partial isometry $u\in\mathcal M$;\\
($\mathbb{D}_{4}$) $\Delta(zp)=\varphi(z)\Delta(p)$ for every $z\in\mathcal P(\mathcal Z(\mathcal M))$
and every $p\in\mathcal P(\mathcal M)$;\\
($\mathbb{D}_{5}$) if $p_\alpha,p\in\mathcal P(\mathcal M),\alpha\in\Gamma$ and $p_\alpha\uparrow p$, then $\Delta(p)=\mathrm{sup}_{\alpha\in\Gamma}\Delta(p_\alpha)$.\\
In addition, $\Delta$ is called a \emph{dimension function} on $\mathcal P(\mathcal M)$
and $\Delta$ also satisfies the following two conditions:\\
($\mathbb{D}_{6}$) if $\{p_n\}_{n=1}^\infty\subset\mathcal P(\mathcal M)$,
then $\Delta(\mathrm{sup}_{n\geq1}p_n)\leq\sum_{n=1}^{\infty}\Delta(p_n)$; moreover, if $p_np_m=0$
when $n\neq m$, then $\Delta(\mathrm{sup}_{n\geq1}p_n)=\sum_{n=1}^{\infty}\Delta(p_n)$;\\
($\mathbb{D}_{7}$) if $\{p_n\}_{n=1}^\infty\subset\mathcal P(\mathcal M)$ and $p_n\downarrow0$, then $\Delta(p_n)\rightarrow0$ almost everywhere.

For arbitrary scalars $\varepsilon,\gamma>0$ and a set $B\in\Sigma$, $\mu(B)<\infty$, we let
\begin{align*}
V(B,\varepsilon,\gamma)=&\{x\in LS(\mathcal M):~\mathrm{there~exist}~p\in\mathcal P(\mathcal M)~\mathrm{and}~z\in\mathcal P(Z(\mathcal M)~\mathrm{such~that}\\&xp\in\mathcal M, \|xp\|_{\mathcal M}\leq\varepsilon,\varphi(z^\bot)\in W(B,\varepsilon,\gamma)~\mathrm{and}~\Delta(zp^\bot)\leq\varepsilon\varphi(z)\},
\end{align*}
where $\|\cdot\|_{\mathcal M}$ is the $C^*$-norm on $\mathcal M$.
In \cite{F. Yeadon}, Yeadon shows that the system of sets
$$\{x+V(B,\varepsilon,\gamma):x\in LS(\mathcal M),\varepsilon,\gamma>0,B\in\Sigma~\mathrm{and}~\mu(B)<\infty\}$$
defines a Hausdorff vector topology $t(\mathcal M)$ on $LS(\mathcal M)$ and the sets $$\{x+V(B,\varepsilon,\gamma),\varepsilon,\gamma>0,B\in\Sigma~\mathrm{and}~\mu(B)<\infty\}$$
form a neighborhood base of a local measurable operator $x$ in $LS(\mathcal M)$.
In \cite{F. Yeadon}, Yeadon also proves that $(LS(\mathcal M),t(\mathcal M))$ is
a complete topological $*$-algebra, and the topology $t(\mathcal M)$
does not depend on the choices of dimension function $\Delta$ and $*$-isomorphism $\varphi$.
The topology $t(\mathcal M)$ on $LS(\mathcal M)$ is called the \emph{local measure topology}.
Moreover, if $\mathcal M=B(\mathcal H)$, then $LS(\mathcal M)=\mathcal M$ and the
local measure topology topology $t(\mathcal M)$
coincides with the uniform topology $\|\cdot\|_{B(\mathcal H)}$.

\section{Continuity of Jordan derivations on algebras of locally measurable operators }

In this section, we give a characterization of Jordan derivations on $LS(\mathcal M)$ of all locally
measurable operators affiliated with a von Neumann algebra $\mathcal M$. We show that if $\mathcal M$ is a properly
infinite von Neumann algebra, then every Jordan derivation from $LS(\mathcal M)$ into itself is
continuous with respect to the local measure topology $t(\mathcal M)$.

To prove the main theorem, we need the following lemmas.

\begin{lemma}
Suppose that $\mathcal{M}$ is a von Neumann algebra
and $\mathcal A$ is a subalgebra of $LS(\mathcal M)$
such that $\mathcal P(\mathcal Z(\mathcal M))\subset\mathcal A$.
If $\delta$ is a Jordan
derivation from $\mathcal{A}$ into $LS(\mathcal{M})$, then for every $a$ in $\mathcal{A}$
and every central projection $z$ in $\mathcal M$,
we have the following two statements:\\
$(1)$ $\delta(ab+ba)=\delta(a)b+a\delta(b)+\delta(b)a+b\delta(a)$;\\
$(2)$ $\delta(z)=0;$\\
$(3)$ $\delta(za)=z\delta(a).$
\end{lemma}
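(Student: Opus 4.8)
The plan is to derive all three identities from the polarized Jordan identity, using the centrality of $z$ in $LS(\mathcal{M})$ and the fact that $LS(\mathcal{M})$, being a complex algebra, is $2$-torsion-free. I would record once at the outset the standard observation that a central projection of $\mathcal{M}$ lies in the center of $LS(\mathcal{M})$ (it commutes with $\mathcal{M}$, hence by affiliation with every locally measurable operator); this is what later lets me move $z$ past $\delta(a)$ and $\delta(z)$.

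For $(1)$ I would expand $\delta\big((a+b)^2\big)$ in two ways. Using additivity of $\delta$ and $(a+b)^2 = a^2 + (ab+ba) + b^2$ gives $\delta((a+b)^2) = \delta(a^2) + \delta(ab+ba) + \delta(b^2)$. On the other hand, applying the Jordan identity to $a+b$ and using additivity of $\delta$ gives $\delta((a+b)^2) = \delta(a)a + a\delta(a) + \delta(b)b + b\delta(b) + \delta(a)b + b\delta(a) + \delta(b)a + a\delta(b)$. Subtracting $\delta(a^2) = \delta(a)a + a\delta(a)$ and $\delta(b^2) = \delta(b)b + b\delta(b)$ from both expressions yields $(1)$. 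No hypothesis beyond additivity and the Jordan identity is used here.

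For $(2)$, from $z = z^2$ the Jordan identity gives $\delta(z) = \delta(z)z + z\delta(z)$, and since $z$ is central in $LS(\mathcal{M})$ this reads $\delta(z) = 2z\delta(z)$. Multiplying on the left by $z$ gives $z\delta(z) = 2z\delta(z)$, so $z\delta(z) = 0$ by $2$-torsion-freeness, and substituting back gives $\delta(z) = 0$. For $(3)$, note first that $za \in \mathcal{A}$ because $z \in \mathcal{P}(\mathcal{Z}(\mathcal{M})) \subset \mathcal{A}$ and $\mathcal{A}$ is a subalgebra. Applying $(1)$ to the pair $z,a$ and using $za = az$ gives $2\delta(za) = \delta(z)a + a\delta(z) + z\delta(a) + \delta(a)z$; the first two terms vanish by $(2)$, and centrality of $z$ turns $\delta(a)z$ into $z\delta(a)$, so $2\delta(za) = 2z\delta(a)$, whence $\delta(za) = z\delta(a)$ by $2$-torsion-freeness.

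The argument is essentially formal, so I do not expect a genuine obstacle; the only point that deserves an explicit word is the claim that elements of $\mathcal{P}(\mathcal{Z}(\mathcal{M}))$ are central in $LS(\mathcal{M})$ and not merely in $\mathcal{M}$, since every commutation of $z$ past $\delta$-values relies on it. Once that remark and the triviality of $2$-torsion-freeness are in place, the three computations fall out in a couple of lines each.
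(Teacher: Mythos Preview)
Your argument is correct and is exactly the standard route: polarize the Jordan identity for (1), use idempotence plus centrality for (2), and combine (1), (2) and centrality for (3), with $2$-torsion-freeness of the complex algebra $LS(\mathcal{M})$ doing the cancellations. The paper itself states this lemma without proof, so there is nothing to compare beyond noting that your write-up supplies precisely the elementary verification the authors omit; your explicit remark that a central projection of $\mathcal{M}$ is central in all of $LS(\mathcal{M})$ is the one nontrivial ingredient, and it is worth recording as you do.
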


Suppose that $\mathcal M$ is a von Neumann algebra and
$\eta$ is a linear mapping from $LS(\mathcal M)$ into itself
such that $\eta (zx)=z\eta(x)$ for every $z$ in $\mathcal P(\mathcal Z(\mathcal M))$.
Thus we can define a linear mapping $\eta_z$ from $LS(z\mathcal M)$
into $LS(z\mathcal M)$ by
$$\eta_z(y)=\eta(y)$$
for every $y$ in $LS(z\mathcal M)$. By \cite[Proposition 2.6]{Ber3}, we can obtain the following
result.

\begin{theorem}\cite{Ber3}
Suppose that $\mathcal M$ is a von Neumann algebra and $\{z_i\}_{i\in I}$ is a orthogonal family of
central projections such that $\mathrm{sup}_{i\in I}z_i=1$.
If $\eta$ is a linear mapping from $LS(\mathcal M)$ into itself such that
$\eta (z_ix)=z_i\eta(x)$ for every $x\in LS(\mathcal M)$ and every $i\in I$, then the following two statements are equivalent:\\
$(1)$ The mapping $\eta:(LS(\mathcal M),t(\mathcal M))\rightarrow(LS(\mathcal M),t(\mathcal M))$ is continuous;\\
$(2)$ The mapping $\eta_{z_i}:(LS(z_i\mathcal M),t(z_i\mathcal M))\rightarrow(LS(z_i\mathcal M),t(z_i\mathcal M))$ is continuous for every $i\in\Gamma$.
\end{theorem}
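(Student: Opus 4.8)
The plan is to reduce the equivalence to the statement that $(LS(\mathcal M),t(\mathcal M))$ decomposes, both algebraically and topologically, as the product of the corner algebras $LS(z_i\mathcal M)$. First I would record the algebraic isomorphism
\[
\Phi\colon LS(\mathcal M)\to\prod_{i\in I}LS(z_i\mathcal M),\qquad \Phi(x)=(z_ix)_{i\in I},
\]
which is well defined and bijective precisely because $\{z_i\}_{i\in I}$ is an orthogonal family of central projections with $\sup_{i\in I}z_i=1$: injectivity uses that $\sum_{i\in F}z_i\uparrow1$ along finite $F\subset I$, and surjectivity uses that local measurability permits gluing an arbitrary family $(y_i)$ with $y_i\in LS(z_i\mathcal M)$ into $\sum_i y_i\in LS(\mathcal M)$. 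The hypothesis $\eta(z_ix)=z_i\eta(x)$ then says exactly that $\eta$ is intertwined by $\Phi$ with the coordinatewise map $\prod_i\eta_{z_i}$, since the $i$-th coordinate of $\Phi(\eta(x))$ is $z_i\eta(x)=\eta(z_ix)=\eta_{z_i}(z_ix)$, whereas the $i$-th coordinate of $\Phi(x)$ is $z_ix$.

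The crucial step is to show that under $\Phi$ the local measure topology $t(\mathcal M)$ corresponds to the product topology $\prod_i t(z_i\mathcal M)$; this is the content of \cite[Proposition 2.6]{Ber3}, which I would invoke here. To see why it holds, it suffices, since all spaces are topological vector spaces, to match the nets converging to $0$. That $x_\alpha\xrightarrow{t(\mathcal M)}0$ implies $z_ix_\alpha\xrightarrow{t(z_i\mathcal M)}0$ for each $i$ is immediate, because multiplication by a central projection is $t(\mathcal M)$-continuous and $t(z_i\mathcal M)$ is the topology induced on the corner $z_iLS(\mathcal M)$. The substantive direction is the converse: assuming $z_ix_\alpha\to0$ for every $i$, I would test $x_\alpha$ against an arbitrary basic neighborhood $V(B,\varepsilon,\gamma)$ with $\mu(B)<\infty$. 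The idea is that $B$ splits as $\bigsqcup_i\bigl(B\cap\operatorname{supp}\varphi(z_i)\bigr)$ with finite total measure, so one may choose a finite subset $F\subset I$ absorbing all but $\gamma$-much of the measure of $B$; on the finitely many pieces indexed by $F$ the coordinatewise convergence forces $x_\alpha$ into the neighborhood eventually, while the remaining tail is controlled by the measure slack $\gamma$ and dimension slack $\varepsilon$ built into $V(B,\varepsilon,\gamma)$.

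With the topological identification in hand, the equivalence $(1)\Leftrightarrow(2)$ is formal. A map into a topological product is continuous if and only if each of its coordinate maps is continuous, and here $\Phi\circ\eta\circ\Phi^{-1}=\prod_i\eta_{z_i}$ acts coordinatewise, so its $i$-th coordinate map is $\eta_{z_i}\circ\pi_i$, where $\pi_i$ is the $i$-th projection. Thus continuity of $\eta$ forces continuity of each $\eta_{z_i}$ (restrict to the $i$-th factor), and conversely continuity of every $\eta_{z_i}$ makes each $\pi_i\circ(\Phi\circ\eta\circ\Phi^{-1})$ continuous, hence $\eta$ continuous by the universal property of the product.

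I expect the main obstacle to be the tail estimate in the converse direction of the topological identification, namely that coordinatewise $t(z_i\mathcal M)$-convergence implies $t(\mathcal M)$-convergence. The difficulty is that $I$ may be uncountable, so one cannot argue coordinate by coordinate; the argument must exploit that each basic neighborhood $V(B,\varepsilon,\gamma)$ involves only a finite-measure set $B$ together with the slacks $\gamma$ and $\varepsilon$, which together let an uncountable family be handled by a finite sub-family plus a negligible remainder. Translating this transparent abelian picture (local convergence in measure on a partitioned measure space) into the general setting, where the neighborhoods are phrased through the dimension function $\Delta$ and the $*$-isomorphism $\varphi$ on the center, is the delicate part; since this is exactly what \cite[Proposition 2.6]{Ber3} supplies, in the write-up I would lean on that citation and keep only the verification of the $\Phi$-intertwining and the universal-property argument fully explicit.
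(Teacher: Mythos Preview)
Your proposal is correct and in fact goes beyond what the paper does: the paper provides no proof of this theorem at all, simply citing it as \cite[Proposition~2.6]{Ber3}. Your sketch ultimately leans on the very same citation for the only nontrivial step (the topological identification $t(\mathcal M)\cong\prod_i t(z_i\mathcal M)$), so the two approaches coincide; the intertwining verification and the universal-property argument you add are the natural unpacking of how that proposition is applied.
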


Suppose that $\mathcal M$ is a von Neumann algebra. The projection lattice
$\mathcal P(\mathcal M)$ is said to have a \emph{countable type}, if every family
of non-zero pairwise orthogonal projections in $\mathcal P(\mathcal M)$ is
countable.

If $\mathcal M$
is a commutative von Neumann algebra and $\mathcal P(\mathcal M)$
has a countable type, then $\mathcal M$ is $*$-isomorphic to a $*$-algebra
$L^\infty(\Omega,\Sigma,\mu)$ with $\mu(\Omega)<\infty$. Hence the topology
$t(L^\infty(\Omega))$ is metrizable  and has a base of neighborhoods of $0$
consisting of the sets $W(\Omega,1/n,1/n)$, $n\in\mathbb{N}$.

If $\mathcal M$
is a commutative von Neumann algebra and $\mathcal P(\mathcal M)$
does not have a countable type. Denote by $\varphi$ a $*$-isomorphism from
$\mathcal M$ onto $L^{\infty}(\Omega,\Sigma,\mu)$, where $\mu$ is a measure such that
the direct sum property. Hence there exists a family of non-zero pairwise orthogonal
projections $\{p_i\}_{i\in\Gamma}\subset\mathcal P(\mathcal M)$ such that
$\mathrm{sup}_{i\in\Gamma}p_i=1$ and $\mu(\varphi(p_i))<\infty$, in particular,
$\mathcal P(p_i(\mathcal Z(\mathcal M)))$ has a countable type.

Let $\mathcal A$ be a $*$-subalgebra of $LS(\mathcal M)$ and $\delta$ be a Jordan derivation from
$\mathcal A$ into $LS(\mathcal M)$. We can define a linear mapping $\delta^*$ from $\mathcal A$ into $LS(\mathcal M)$
by
$$\delta^*(a)=(\delta(a^*))^*$$
for every $a$ in $\mathcal A$. It is easy to show that $\delta^*$
is also a Jordan derivation.

A Jordan derivation $\delta$ from
$\mathcal A$ into $LS(\mathcal M)$ is said to be \emph{self-adjoint}
if $\delta=\delta^*$. Moreover, every Jordan derivation $\delta$ can be represented in the form
$$\delta=\mathrm{Re}(\delta)+i\mathrm{Im}(\delta),$$
where
$\mathrm{Re}(\delta)=(\delta+\delta^*)/2$ is the real part of $\delta$
and $\mathrm{Im}(\delta)=(\delta-\delta^*)/2i$ is the imaginary part of $\delta$.
Obviously, $\mathrm{Re}(\delta)$ and $\mathrm{Im}(\delta)$ are also two Jordan derivations.

Since $(LS(\mathcal M),t(\mathcal M))$ is a topological $*$-algebra, we have the following result.

\begin{lemma}
Suppose that $\mathcal M$ is a von Neumann algebra
and $\mathcal A$ is a subalgebra of $LS(\mathcal M)$. If $\delta$ is a
Jordan derivation $\delta$ from $\mathcal A$ into $LS(\mathcal M)$, then $\delta$ is continuous with respect
to the topology $t(\mathcal M)$ if and only if the self-adjoint Jordan derivations $\mathrm{Re}(\delta)$
and $\mathrm{Im}(\delta)$ are continuous with respect to the topology $t(\mathcal M)$.
\end{lemma}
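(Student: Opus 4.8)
The plan is to exploit the two structural facts already recorded in the excerpt: first, that every Jordan derivation decomposes as $\delta = \mathrm{Re}(\delta) + i\,\mathrm{Im}(\delta)$, where $\mathrm{Re}(\delta)=(\delta+\delta^*)/2$ and $\mathrm{Im}(\delta)=(\delta-\delta^*)/2i$ are again Jordan derivations; and second, that $(LS(\mathcal M), t(\mathcal M))$ is a topological $*$-algebra, so in particular the involution $x \mapsto x^*$ is $t(\mathcal M)$-continuous (indeed a homeomorphism) and addition and scalar multiplication are continuous. Since a linear map between topological vector spaces is continuous as soon as it is continuous at $0$, all the reductions below are legitimate.

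For the \emph{if} direction I would argue that continuity of $\mathrm{Re}(\delta)$ and $\mathrm{Im}(\delta)$ forces continuity of $\delta$: because $t(\mathcal M)$ is a vector topology, the map $a \mapsto \mathrm{Re}(\delta)(a) + i\,\mathrm{Im}(\delta)(a)$ is continuous as a linear combination of continuous maps, and this map is precisely $\delta$.

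For the \emph{only if} direction the key observation is that $\delta^*$ is $t(\mathcal M)$-continuous whenever $\delta$ is. Indeed $\delta^*$ is the composition $a \mapsto a^* \mapsto \delta(a^*) \mapsto (\delta(a^*))^*$: the first and last arrows are restrictions of the $t(\mathcal M)$-continuous involution (on the subalgebra $\mathcal A$ with the induced topology, and on all of $LS(\mathcal M)$, respectively), while the middle arrow is $\delta$, continuous by hypothesis. Hence $\delta^*$ is continuous, and therefore so are $\mathrm{Re}(\delta) = \tfrac12(\delta + \delta^*)$ and $\mathrm{Im}(\delta) = \tfrac{1}{2i}(\delta - \delta^*)$, once more because $t(\mathcal M)$ is a vector topology.

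There is essentially no serious obstacle in this argument; the only points requiring a word of care are that the involution on $LS(\mathcal M)$ really is $t(\mathcal M)$-continuous and that $\mathcal A$ is closed under the involution so that $\delta^*$ is defined on $\mathcal A$. Both are available from the preliminaries: Yeadon's theorem that $(LS(\mathcal M),t(\mathcal M))$ is a complete topological $*$-algebra, together with the standing assumption that $\mathcal A$ is a $*$-subalgebra of $LS(\mathcal M)$.
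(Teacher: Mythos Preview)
Your argument is correct and matches the paper's own approach: the paper does not give a detailed proof of this lemma but simply records it as an immediate consequence of the fact that $(LS(\mathcal M),t(\mathcal M))$ is a topological $*$-algebra, which is exactly the mechanism you spell out. Your only added care---that $\mathcal A$ must be a $*$-subalgebra so that $\delta^*$ is defined---is also implicit in the paper, where $\delta^*$ is introduced under precisely that hypothesis just before the lemma is stated.
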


The following theorem is the main result in this section.

\begin{theorem}
Suppose that $\mathcal{M}$ is a properly infinite von Neumann algebra and
$\delta$ is a Jordan derivation from $LS(\mathcal{M})$ into itself. Then $\delta$ is
automatically continuous with respect the local measure topology $t(\mathcal{M})$.
\end{theorem}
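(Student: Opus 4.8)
The plan is to deduce the theorem from the already-established continuity of \emph{ordinary} derivations on $LS(\mathcal M)$, by first upgrading the Jordan derivation $\delta$ to an associative derivation. The point is that $LS(\mathcal M)$ is a $2$-torsion-free semiprime ring. It is unital and contains the scalar multiples of the identity, so $na=0$ forces $a=(n\cdot 1)^{-1}(n\cdot 1)a=0$ for every integer $n\ge 2$. It is semiprime because $LS(\mathcal M)$ is a $*$-algebra in which $x^{*}x=0$ implies $x=0$ (by polar decomposition and spectral theory): if $a\,LS(\mathcal M)\,a=\{0\}$, then evaluating at the ring element $a^{*}$ gives $aa^{*}a=0$, whence $(aa^{*})(aa^{*})^{*}=aa^{*}aa^{*}=0$, so $aa^{*}=0$ and therefore $a=0$.

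With semiprimeness in hand, I would invoke Cusack's theorem \cite{Cusack} (see also \cite{M. Bresar2}): every Jordan derivation of a $2$-torsion-free semiprime ring into itself is a derivation. Applied to $\mathcal R=LS(\mathcal M)$, this shows that $\delta$ is an associative derivation from $LS(\mathcal M)$ into itself. Since $\mathcal M$ is properly infinite, the theorem of Ber, Chilin and Sukochev \cite{Ber3} (or \cite{Ber2}, which moreover shows such derivations are inner) guarantees that every derivation of $LS(\mathcal M)$ into itself is continuous with respect to the local measure topology $t(\mathcal M)$. Hence $\delta$ is $t(\mathcal M)$-continuous, which is the assertion.

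Along this route there is no serious obstacle: the two substantive ingredients --- the ring-theoretic identity ``Jordan derivation $=$ derivation'' and the continuity of derivations --- are both available off the shelf, and the only thing one checks by hand is semiprimeness of $LS(\mathcal M)$. If instead one wants an argument that stays inside the machinery assembled in this section (which is presumably why Lemma 3.1, Lemma 3.3 and Theorem 3.2 are recorded), the natural alternative plan is: reduce to a self-adjoint $\delta$ via Lemma 3.3; use the identity $\delta(za)=z\delta(a)$ for central projections $z$ from Lemma 3.1(3) together with Theorem 3.2 to cut $\mathcal M$ along an orthogonal family of central projections $\{z_i\}$ with each $\mathcal P(z_i\mathcal Z(\mathcal M))$ of countable type, reducing to a properly infinite $\mathcal M$ whose centre is countably decomposable; and then exploit that in that case $(LS(\mathcal M),t(\mathcal M))$ is a complete metrizable topological vector space, so that $\delta$ is continuous as soon as its graph is closed. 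Verifying closedness of the graph --- controlling $\delta$ along a $t(\mathcal M)$-null sequence through a separating-subspace argument of Johnson--Sinclair type, adapted to $LS(\mathcal M)$ and using the system of matrix units furnished by proper infiniteness --- would be the technical heart of that approach, and is the step I would expect to demand the most work.
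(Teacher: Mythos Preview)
Your first route is correct and complete: $LS(\mathcal M)$ is indeed a $2$-torsion-free semiprime ring (your verification of semiprimeness via the $*$-structure and polar decomposition is sound), so Cusack's theorem (or Bre\v{s}ar's version) upgrades $\delta$ to an associative derivation, and then the Ber--Chilin--Sukochev continuity theorem \cite{Ber3} finishes the job. This is a perfectly valid and much shorter proof of Theorem~3.4.

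The paper does \emph{not} take this route. Instead it follows essentially the ``alternative plan'' you sketch in your final paragraph, carried out in full detail: reduce to a self-adjoint $\delta$ (Lemma~3.3), cut by central projections to make $\mathcal P(\mathcal Z(\mathcal M))$ of countable type so that $(LS(\mathcal M),t(\mathcal M))$ is an $F$-space (Theorem~3.2 and Lemma~3.1(3)), assume for contradiction that some $t(\mathcal M)$-null sequence has $\delta$-images converging to a nonzero limit, and then exploit proper infiniteness to manufacture an infinite family of mutually orthogonal, mutually equivalent projections. A long estimate in the dimension function $\Delta$ eventually produces a projection that is simultaneously finite and infinite. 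Throughout, the Jordan identity is handled via the symmetrised product rule of Lemma~3.1 rather than by first passing to an associative derivation.

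What each approach buys: yours is dramatically shorter and conceptually transparent, reducing the new theorem to two off-the-shelf results. The paper's approach is self-contained in the sense that it reproves, directly for Jordan derivations, the hard analytic core of \cite{Ber3} rather than invoking it as a black box; it also exercises the lemmas (3.1, 3.3, Theorem~3.2) that are reused in Section~4. For Theorem~3.4 in isolation, your argument is the more efficient one; the paper's longer argument is essentially a Jordan-adapted replay of the Ber--Chilin--Sukochev proof.
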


\begin{proof}
By Lemma 3.3, we can assume that $\delta$ is a self-adjoint Jordan derivation,
that is $\delta=\delta^*$.

Since $\mathcal Z(\mathcal{M})$ is a commutative von Neumann algebra,
there exists an orthogonal family of central projections
$\{z_i\}_{i\in\Gamma}\subset\mathcal{P}(\mathcal Z(\mathcal{M}))$ such that
$\mathrm{sup}_{i\in\Gamma}z_i=1$ and the Boolean algebra $\mathcal{P}(z_i\mathcal Z(\mathcal{M}))$ has a countable
type for every $i\in\Gamma$.
By Lemma 3.2 (2), we have that
$\delta(z_ix)=z_i\delta(x)$
for every $x$ in $LS(\mathcal{M})$ and every $i$ in $\Gamma$, by Theorem 3.2,
it is sufficient to prove that $\delta_{z_i}$ on $LS(z_i\mathcal M)$ is continuous with the local measure topology $t(z_i\mathcal{M})$
for every $i\in I$.
Thus without loss of generality, we can assume that the
Boolean algebra $\mathcal{P}(\mathcal Z(\mathcal{M}))$ has a countable type. It follows that
the topology $t(\mathcal M)$ is metrizable, and the sets $V(\Omega,1/n,1/n),n\in\mathbb{ N}$
form a countable base of neighborhoods of $0$, in particular, $(LS(\mathcal M),t(\mathcal M))$ is an
$F$-space.

By contradiction, we suppose that $\delta$ is discontinuous with respect the local measure topology $t(\mathcal{M})$. It means that
there exist a sequence $\{a_n\}\subset LS(\mathcal M)$ and a non-zero element
$a\in LS(\mathcal M)$ such that
$a_n\xrightarrow[n\rightarrow\infty]{t(\mathcal M)}0$ and $\delta(a_n)\xrightarrow[n\rightarrow\infty]{t(\mathcal M)}a$.
Since $(LS(\mathcal M),t(\mathcal M))$ is a topological $*$-algebra and $\delta$ is a self-adjoint
Jordan derivation, we can assume that $a_n=a_n^*$ and $a=a^*$ for every $n$ in $\mathbb{N}$.
Hence $a=a_+-a_-$, where $a_+$ and $a_-$ are positive and
negative parts of $a$ in $LS_+(\mathcal M)$, respectively,
we can assume that $a_+\neq0$,
otherwise, we replace $a_n$ with $-a_n$.

Now we select two scalars $0<\lambda_1<\lambda_2$ such that the projection
$p_a=E_{\lambda_2}(a)-E_{\lambda_1}(a)\neq0$.
It implies that
$0<\lambda_1p_a\leq p_aap_a\leq\lambda_2p_a~\mathrm{and}~\|p_aap_a\|_{\mathcal{M}}\leq\lambda_2$,
it means that $p_aap_a$ is a positive element in $\mathcal M$.

Next we denote by $x_n=p_aa_np_a$ and $x=p_aap_a$, by Lemma 3.1 (3) we have that
$$\delta(x_n)=\delta(p_aa_np_a)=\delta(p_a)a_np_a+p_a\delta(a_n)p_a+p_aa_n\delta(p_a),$$
it follows that $x_n\xrightarrow[n\rightarrow\infty]{t(\mathcal M)}0$ and $\delta(x_n)\xrightarrow[n\rightarrow\infty]{t(\mathcal M)}x$.

It is obvious that $x$ is a positive element in $\mathcal M$, we can select two scalars $0<\mu_1<\mu_2$ such that the projection
$p_x=E_{\mu_2}(x)-E_{\mu_1}(x)\neq0$.
It implies that
$$0<\mu_1p_x\leq p_xxp_x\leq\mu_2p_x~\mathrm{and}~\|p_xxp_x\|_{\mathcal{M}}\leq\mu_2.$$
Without loss of generality, we can assume that $\mu_1=1$, then
$p_xxp_x\geq p_x$,
otherwise, we replace $x_n$ with $x_n/\mu_1$,
Since $\mathcal M$ is a properly infinite von Neumann algebra, there exists a sequence
of non-zero pairwise orthogonal projections
$\{\hat{p_m}\}_{m=1}^{\infty}$ in $\mathcal M$ such that $\sum_{m=1}^{\infty}\hat{p_m}=1$,
$\hat{p_m}\sim1$ and $p_x\preceq\hat{p_m}$ for every $m\in\mathbb{N}$,
it means that there exists a sequence of non-zero pairwise orthogonal projections
$\{p_m\}_{m=1}^{\infty}$ in $\mathcal M$ such that $p_x\sim p_m\leq\hat{p_m}$
for every $m\in\mathbb{N}$.
It follows that there is a partial isometry $v_m$ in $\mathcal M$ such that $v_m^*v_m=p_x$
and $v_mv_m^*=p_m$. Denote by
$$p_0=\sum_{m=1}^{\infty}p_m,$$
it is easy to see that $p_0$ is an infinite projection in $\mathcal M$.

Since $x$ is a positive element in $\mathcal M$, we know that $p_mv_mxv_m^*p_m$ and $p_mv_m^*xv_mp_m$
are also two positive elements in $\mathcal M$, and by $p_xxp_x\geq p_x$, we have that
$$p_m(v_mxv_m^*+v_m^*xv_m)p_m\geq p_mv_mxv_m^*p_m=p_mv_mp_xxp_xv_m^*p_m\geq p_mv_mp_xv_m^*p_m=p_m$$
and
\begin{align*}
\|p_m(v_mxv_m^*+v_m^*xv_m)p_m\|_{\mathcal M}&\leq\|p_mv_mxv_m^*p_m\|_{\mathcal M}+\|p_mv_m^*xv_mp_m\|_{\mathcal M}\notag\\
&\leq2\|x\|_{\mathcal M}\leq2\lambda_2.
\end{align*}
It means that the series $\sum_{m=1}^\infty p_m(v_mxv_m^*+v_m^*xv_m)p_m$ converges with respect to
the topology $\tau_{so}$ to some operator $y$ in $\mathcal M$ satisfying
\begin{align}
\|y\|_{\mathcal M}=\mathrm{sup}_{m\geq1}\|p_m(v_mxv_m^*+v_m^*xv_m)p_m\|_{\mathcal M}\leq2\lambda_2~~\mathrm{and}~~y\geq p_0.         \label{301}
\end{align}
In the following, we assume that the central carrier $\mathcal C_{p_0}$ of the projection $p_0$ is equal to $1$, otherwise,
we replace the algebra $\mathcal M$ with the algebra $\mathcal C_{p_0}\mathcal M\mathcal C_{p_0}$.

Let $\varphi$ be a $*$-isomorphism from $\mathcal Z(\mathcal M)$ onto $L^\infty(\Omega,\Sigma,\mu)$.
By the assumption, the Boolean algebra $\mathcal P(\mathcal Z(\mathcal M))$ has a countable type, and
we assume that $\mu(\Omega)=\int_{\Omega}1_{L^\infty(\Omega)}d\mu=1$, where $1_{L^\infty(\Omega)}$
is the identity of the $*$-algebra $L^\infty(\Omega,\Sigma,\mu)$. In this case, the countable base of
neighborhoods of $0$ in the topology $t(\mathcal M)$ is formed by the sets $V(\Omega,1/n,1/n)$, $n\in\mathbb{N}$.

By $x_n\xrightarrow[n\rightarrow\infty]{t(\mathcal M)}0$ and $\delta(x_n)\xrightarrow[n\rightarrow\infty]{t(\mathcal M)}x$,
we can obtain the following four inequalities:
\begin{align}
p_m(v_mx_nv_m^*+v_m^*x_nv_m)p_m\xrightarrow[n\rightarrow\infty]{t(\mathcal M)}0;                              \label{302}
\end{align}
\begin{align}
\delta(p_m)(v_mx_nv_m^*+v_m^*x_nv_m)p_m\xrightarrow[n\rightarrow\infty]{t(\mathcal M)}0;                              \label{303}
\end{align}
\begin{align}
p_m(\delta(v_m)x_nv_m^*+\delta(v_m^*)x_nv_m)p_m\xrightarrow[n\rightarrow\infty]{t(\mathcal M)}0;                              \label{304}
\end{align}
and
\begin{align}
p_m(v_m\delta(x_n)v_m^*+v_m^*\delta(x_n)v_m)p_m\xrightarrow[n\rightarrow\infty]{t(\mathcal M)}p_m(v_mxv_m^*+v_m^*xv_m)p_m.               \label{305}
\end{align}

In the following fix $k$ in $\mathbb{N}$. By \eqref{302},
we can select an index $n_1(m,k)$, a projection $q^{(1)}_{m,n}\in\mathcal P(\mathcal M)$
and a central projection $z^{(1)}_{m,n}\in\mathcal P(\mathcal Z(\mathcal M))$, such that the following
three  inequalities:
\begin{align}
\|p_m(v_mx_nv_m^*+v_m^*x_nv_m)p_mq^{(1)}_{m,n}\|_{\mathcal M}\leq2^{-m}(k+1)^{-1};    \label{306}
\end{align}
\begin{align}
\int_\Omega\varphi(1-z^{(1)}_{m,n})d\mu\leq4^{-1}2^{-m-k-1};                          \label{307}
\end{align}
and
\begin{align}
\Delta(z^{(1)}_{m,n}(1-q^{(1)}_{m,n}))\leq4^{-1}2^{-m-k-1}\varphi(z^{(1)}_{m,n})     \label{308}
\end{align}
for every $n\geq n_1(m,k)$.

By \eqref{303}, we can select an index $n_2(m,k)$, a projection $q^{(2)}_{m,n}\in\mathcal P(\mathcal M)$
and a central projection $z^{(2)}_{m,n}\in\mathcal P(\mathcal Z(\mathcal M))$, such that the following
three  inequalities:
\begin{align}
\|\delta(p_m)(v_mx_nv_m^*+v_m^*x_nv_m)p_mq^{(2)}_{m,n}\|_{\mathcal M}\leq5^{-1}2^{-m}(k+1)^{-1};    \label{309}
\end{align}
\begin{align}
\int_\Omega\varphi(1-z^{(2)}_{m,n})d\mu\leq4^{-1}2^{-m-k-1};                          \label{310}
\end{align}
and
\begin{align}
\Delta(z^{(2)}_{m,n}(1-q^{(2)}_{m,n}))\leq4^{-1}2^{-m-k-1}\varphi(z^{(2)}_{m,n})     \label{311}
\end{align}
for every $n\geq n_2(m,k)$.

By \eqref{304} we can select an index $n_3(m,k)$, a projection $q^{(3)}_{m,n}\in\mathcal P(\mathcal M)$
and a central projection $z^{(3)}_{m,n}\in\mathcal P(\mathcal Z(\mathcal M))$, such that the following
three  inequalities:
\begin{align}
\|p_m(\delta(v_m)x_nv_m^*+\delta(v_m^*)x_nv_m)p_mq^{(3)}_{m,n}\|_{\mathcal M}\leq5^{-1}2^{-m}(k+1)^{-1};    \label{312}
\end{align}
\begin{align}
\int_\Omega\varphi(1-z^{(3)}_{m,n})d\mu\leq4^{-1}2^{-m-k-1};                          \label{313}
\end{align}
and
\begin{align}
\Delta(z^{(3)}_{m,n}(1-q^{(3)}_{m,n}))\leq4^{-1}2^{-m-k-1}\varphi(z^{(3)}_{m,n})     \label{314}
\end{align}
for every $n\geq n_3(m,k)$.

By \eqref{305}, we can select an index $n_4(m,k)$, a projection $q^{(4)}_{m,n}\in\mathcal P(\mathcal M)$
and a central projection $z^{(4)}_{m,n}\in\mathcal P(\mathcal Z(\mathcal M))$, such that the following
three  inequalities:
\begin{align}
\|p_m((v_m\delta(x_n)v_m^*+v_m^*\delta(x_n)v_m)-&(v_mxv_m^*+v_m^*xv_m))p_mq^{(4)}_{m,n}\|_{\mathcal M}       \notag\\
\leq&5^{-1}2^{-m}(k+1)^{-1};                                                                                   \label{315}
\end{align}
\begin{align}
\int_\Omega\varphi(1-z^{(4)}_{m,n})d\mu\leq4^{-1}2^{-m-k-1};                          \label{316}
\end{align}
and
\begin{align}
\Delta(z^{(4)}_{m,n}(1-q^{(4)}_{m,n}))\leq4^{-1}2^{-m-k-1}\varphi(z^{(4)}_{m,n})     \label{317}
\end{align}
for every $n\geq n_4(m,k)$.

Let $n(m,k)=\mathrm{max}_{i=1,2,3,4}n_i(m,k)$, $q_m=\mathrm{inf}_{i=1,2,3,4}q^{(i)}_{m,n}$
and $z_m=\mathrm{inf}_{i=1,2,3,4}z^{(i)}_{m,n}$. By \eqref{306}, \eqref{309}, \eqref{312} and \eqref{315},
we have that the following four inequalities:
\begin{align}
\|p_m(v_mx_{n(m,k)}v_m^*+v_m^*x_{n(m,k)}v_m)p_mq_{m}\|_{\mathcal M}\leq2^{-m}(k+1)^{-1};           \label{318}
\end{align}
\begin{align}
&\|\delta(p_m)(v_mx_{n(m,k)}v_m^*+v_m^*x_{n(m,k)}v_m)p_mq_{m}\|_{\mathcal M}  \notag\\
=&\|q_{m}p_m(v_mx_{n(m,k)}v_m^*+v_m^*x_{n(m,k)}v_m)\delta(p_m)\|_{\mathcal M} \notag\\
\leq&5^{-1}2^{-m}(k+1)^{-1};                                                                       \label{319}
\end{align}
\begin{align}
&\|p_m(\delta(v_m)x_{n(m,k)}v_m^*+\delta(v_m^*)x_{n(m,k)}v_m)p_mq_{m}\|_{\mathcal M}         \notag\\
=&\|q_{m}p_m(v_mx_{n(m,k)}\delta(v_m^*)+v_m^*x_{n(m,k)}\delta(v_m))p_m\|_{\mathcal M}        \notag\\
\leq&5^{-1}2^{-m}(k+1)^{-1};                                                                            \label{320}
\end{align}
and
\begin{align}
\|p_m((v_m\delta(x_{n(m,k)})v_m^*+v_m^*\delta(x_{n(m,k)})v_m)-&(v_mxv_m^*+v_m^*xv_m))p_mq_{m}\|_{\mathcal M}       \notag\\
\leq&5^{-1}2^{-m}(k+1)^{-1}.                                                                                \label{321}
\end{align}
By \eqref{307}, \eqref{310}, \eqref{313} and \eqref{316}, we have that
\begin{align}
1-\int_\Omega\varphi(z_m)d\mu=\int_\Omega\varphi(1-z_m)d\mu&\leq\sum_{i=1}^4\int_\Omega\varphi(1-z^{(i)}_{m,n(m,k)})d\mu\notag\\
                                                           &\leq2^{-m-k-1}.                    \label{322}
\end{align}
By \eqref{308}, \eqref{311}, \eqref{314}, \eqref{317} and Condition $\mathbb{D}_6$ we have that
\begin{align}
\Delta(z_m(1-q_m))\leq\sum_{i=1}^4\Delta(z_m(1-q^{(i)}_{m,n(m,k)}))\leq2^{-m-k-1}\varphi(z_m).                   \label{323}
\end{align}

Let $m_1$ and $m_2$ be in $\mathbb{N}$ with $m_1<m_2$, denote by
$$q_{m_1,m_2}={\inf_{m_1<m\leq m_2}}q_m~~\mathrm{and}~~z_{m_1,m_2}={\inf_{m_1<m\leq m_2}}z_m.$$
Thus
$$1-q_{m_1,m_2}={\sup_{m_1<m\leq m_2}}1-q_m~~\mathrm{and}~~1-z_{m_1,m_2}={\sup_{m_1<m\leq m_2}}1-z_m.$$
It follows that
$$\varphi(1-q_{m_1,m_2})={\sup_{m_1<m\leq m_2}}\varphi(1-q_m)$$
and
$$\varphi(1-z_{m_1,m_2})={\sup_{m_1<m\leq m_2}}\varphi(1-z_m).$$
Hence by \eqref{322}, we can obtain that
\begin{align}
1-\int_\Omega\varphi(z_{m_1,m_2})d\mu=\int_\Omega\varphi(1-z_{m_1,m_2})d\mu&\leq\sum_{m=m_1+1}^{m_2}\int_\Omega\varphi(1-z_m)d\mu\notag\\
                                                                           &\leq2^{-m_1-k-1}.                                          \label{324}
\end{align}
By \eqref{323} and Condition $\mathbb{D}_{6}$, we can obtain that
\begin{align}
\Delta(z_{m_1,m_2}(1-q_{m_1,m_2}))&\leq\sum_{m=m_1+1}^{m_2}\Delta(z_{m_1,m_2}(1-q_m))  \notag\\
&\leq2^{-m_1-k-1}\varphi(z_{m_1,m_2}).                                                                    \label{325}
\end{align}
Moreover, by \eqref{318}, we have that
\begin{align}
&\|\sum^{m_2}_{m=m_1+1}p_m(v_mx_{n(m,k)}v_m^*+v_m^*x_{n(m,k)}v_m)p_mq_{m_1,m_2}\|_{\mathcal M}\notag\\
\leq&\sum^{m_2}_{m=m_1+1}\|p_m(v_mx_{n(m,k)}v_m^*+v_m^*x_{n(m,k)}v_m)p_mq_{m}\|_{\mathcal M}\notag\\
\leq&2^{-m_1}(k+1)^{-1}.                                                                                     \label{326}
\end{align}

By \eqref{324}, \eqref{325} and \eqref{326}, it follows that
$$S_{l,k}=\sum^{l}_{m=1}p_m(v_mx_{n(m,k)}v_m^*+v_m^*x_{n(m,k)}v_m)p_m,~~l\geq1$$
is a Cauchy sequence in the $F$-space $(LS(\mathcal M),t(\mathcal M))$
for every $k\in\mathbb{N}$.
Hence, there exists an element $y_k\in LS(\mathcal M)$ such that
$S_{l,k}\xrightarrow[l\rightarrow\infty]{t(\mathcal M)}y_k$.
Thus the series
\begin{align}
y_k=\sum^{\infty}_{m=1}p_m(v_mx_{n(m,k)}v_m^*+v_m^*x_{n(m,k)}v_m)p_m                      \label{327}
\end{align}
converges in $LS(\mathcal M)$ with respect to the topology $t(\mathcal M)$.
Since the involution is continuous in topology $t(\mathcal M)$ and $S_{l,k}=S_{l,k}^*$,
it implies that $y_k=y_k^*$.

Denote by
\begin{align}
r_m=p_m\wedge q_m,~~m\in\mathbb{N}.                      \label{328}
\end{align}
Since
$z_m(p_m-p_m\wedge q_m)\sim z_m(p_m\vee q_m-q_m)$, by Condition $\mathbb{D}_3$ and \eqref{323}
we have that
\begin{align}
\Delta(z_m(p_m-r_m))=&\Delta(z_m(p_m-p_m\wedge q_m))=\Delta(z_m(p_m\vee q_m-q_m)) \notag\\
\leq&\Delta(z_m(1-q_m))\leq2^{-m-k-1}\varphi(z_m).                                        \label{329}
\end{align}
Denote by
\begin{align}
q_0^{(k)}={\sup_{m\geq1}}r_m~~\mathrm{and}~~z_0^{(k)}={\inf_{m\geq1}}z_m,                       \label{330}
\end{align}
by \eqref{301}, \eqref{328} and \eqref{330}, we have that
\begin{align}
y\geq p_0\geq q_0^{(k)}.                                                               \label{331}
\end{align}
By \eqref{324}, we have that
\begin{align}
1-\int_{\Omega}\varphi(z_0^{(k)})d\mu=\int_{\Omega}\varphi(1-z_0^{(k)})d\mu\leq2^{-k-1}.                                     \label{332}
\end{align}

Since $p_mp_j=0$ when $m\neq j$ and $r_m\leq p_m$, by \eqref{330} we can obtain that
$p_0-q_0^{(k)}=\mathrm{sup}_{m\geq1}(p_m-r_m)$. By Condition $\mathbb{D}_6$ and \eqref{329} we have that
\begin{align}
\Delta(z_0^{k}(p_0-q_0^{(k)}))=\sum_{m=1}^\infty\Delta(z_0^{k}(p_m-r_m))\leq 2^{-k-1}\varphi(z_0^{k}).                                     \label{333}
\end{align}

By \eqref{328}, we have that $p_mq_0^{(k)}=r_mq_0^{(k)}=r_m=r_mq_m$ for every $m\in\mathbb{N}$. It follows that
$$p_m(v_mx_{n(m,k)}v_m^*+v_m^*x_{n(m,k)}v_m)p_mq_0^{(k)}=p_m(v_mx_{n(m,k)}v_m^*+v_m^*x_{n(m,k)}v_m)p_mr_m$$
and by \eqref{318}, we can obtain that
\begin{align}
\|y_kq_0^{(k)}\|_{\mathcal M}=&\|\sum^{\infty}_{m=1}p_m(v_mx_{n(m,k)}v_m^*+v_m^*x_{n(m,k)}v_m)p_mq_0^{(k)}\|_{\mathcal M}\notag\\
=&\|\sum^{\infty}_{m=1}p_m(v_mx_{n(m,k)}v_m^*+v_m^*x_{n(m,k)}v_m)p_mq_0^{(k)}\|_{\mathcal M}\notag\\
\leq&\sum^{\infty}_{m=1}\|p_m(v_mx_{n(m,k)}v_m^*+v_m^*x_{n(m,k)}v_m)p_mr_m\|_{\mathcal M}\notag\\
\leq&\sum^{\infty}_{m=1}\|p_m(v_mx_{n(m,k)}v_m^*+v_m^*x_{n(m,k)}v_m)p_mq_m\|_{\mathcal M}\notag\\
\leq&(k+1)^{-1}                                                                                               \label{334}
\end{align}

By Lemma 3.1 (2) and (3), we have that
\begin{align*}
&q_0^{(k)}\delta(p_m(v_mx_{n(m,k)}v_m^*+v_m^*x_{n(m,k)}v_m)p_m)q_0^{(k)}\\
=&q_0^{(k)}\delta(p_m)(v_mx_{n(m,k)}v_m^*+v_m^*x_{n(m,k)}v_m)p_mq_0^{(k)}+q_0^{(k)}p_m(v_mx_{n(m,k)}v_m^*+v_m^*x_{n(m,k)}v_m)\delta(p_m)q_0^{(k)}\\
&+q_0^{(k)}p_m\delta(v_mx_{n(m,k)}v_m^*+v_m^*x_{n(m,k)}v_m)p_mq_0^{(k)}\\
=&q_0^{(k)}\delta(p_m)(v_mx_{n(m,k)}v_m^*+v_m^*x_{n(m,k)}v_m)p_mq_0^{(k)}+q_0^{(k)}p_m(v_mx_{n(m,k)}v_m^*+v_m^*x_{n(m,k)}v_m)\delta(p_m)q_0^{(k)}\\
&+q_0^{(k)}p_m(\delta(v_m)x_{n(m,k)}v_m^*+\delta(v_m^*)x_{n(m,k)}v_m+v_mx_{n(m,k)}\delta(v_m^*)+v_m^*x_{n(m,k)}\delta(v_m))p_mq_0^{(k)}\\
&+q_0^{(k)}p_m((v_m\delta(x_{n(m,k)})v_m^*+v_m^*\delta(x_{n(m,k)})v_m)-(v_mxv_m^*+v_m^*xv_m))p_mq_0^{(k)}\\
&+q_0^{(k)}p_m(v_mxv_m^*+v_m^*xv_m)p_mq_0^{(k)}\\
=&q_0^{(k)}\delta(p_m)(v_mx_{n(m,k)}v_m^*+v_m^*x_{n(m,k)}v_m)p_mq_mr_m+r_mq_mp_m(v_mx_{n(m,k)}v_m^*+v_m^*x_{n(m,k)}v_m)\delta(p_m)q_0^{(k)}\\
&+r_mq_mp_m(\delta(v_m)x_{n(m,k)}v_m^*+\delta(v_m^*)x_{n(m,k)}v_m+v_mx_{n(m,k)}\delta(v_m^*)+v_m^*x_{n(m,k)}\delta(v_m))p_mq_mr_m\\
&+r_mq_mp_m((v_m\delta(x_{n(m,k)})v_m^*+v_m^*\delta(x_{n(m,k)})v_m)-(v_mxv_m^*+v_m^*xv_m))p_mq_mr_m\\
&+q_0^{(k)}p_m(v_mxv_m^*+v_m^*xv_m)p_mq_0^{(k)}.
\end{align*}

Consider the following formal series:
\begin{align}
\sum_{m=1}^\infty q_0^{(k)}\delta(p_m)(v_mx_{n(m,k)}v_m^*+v_m^*x_{n(m,k)}v_m)p_mq_mr_m;                                   \label{335}
\end{align}
\begin{align}
\sum_{m=1}^\infty r_mq_mp_m(v_mx_{n(m,k)}v_m^*+v_m^*x_{n(m,k)}v_m)\delta(p_m)q_0^{(k)};                                   \label{336}
\end{align}
\begin{align}
\sum_{m=1}^\infty r_mq_mp_m(\delta(v_m)x_{n(m,k)}v_m^*+\delta(v_m^*)x_{n(m,k)}v_m)p_mq_mr_m;                                   \label{337}
\end{align}
\begin{align}
\sum_{m=1}^\infty r_mq_mp_m(v_mx_{n(m,k)}\delta(v_m^*)+v_m^*x_{n(m,k)}\delta(v_m))p_mq_mr_m;                                   \label{338}
\end{align}
\begin{align}
\sum_{m=1}^\infty r_mq_mp_m&(v_m\delta(x_{n(m,k)})v_m^*+v_m^*\delta(x_{n(m,k)})v_m\notag\\
&-(v_mxv_m^*+v_m^*xv_m))p_mq_mr_m;                                                                                 \label{339}
\end{align}
and
\begin{align}
\sum_{m=1}^\infty q_0^{(k)}p_m(v_mxv_m^*+v_m^*xv_m)p_mq_0^{(k)}.                                   \label{340}
\end{align}

By \eqref{319}, we know that the first series \eqref{335} and the second series \eqref{336}
converge with respect to the norm $\|\cdot\|_{\mathcal M}$ to some elements $a$ and $b$ in $\mathcal M$,
respectively. Moreover, $\|a\|_{\mathcal M}\leq5^{-1}(k+1)^{-1}$ and $\|b\|_{\mathcal M}\leq5^{-1}(k+1)^{-1}$;
by \eqref{320}, we know that the third series \eqref{337} and the fourth series \eqref{338}
converge with respect to the norm $\|\cdot\|_{\mathcal M}$ to some elements $c$ and $d$ in $\mathcal M$;
respectively. Moreover, $\|c\|_{\mathcal M}\leq5^{-1}(k+1)^{-1}$ and $\|d\|_{\mathcal M}\leq5^{-1}(k+1)^{-1}$;
by \eqref{321}, we know that the fifth series \eqref{339}
converge with respect to the norm $\|\cdot\|_{\mathcal M}$ to some elements $e$ and
$\|e\|_{\mathcal M}\leq5^{-1}(k+1)^{-1}$.

Finally, since $y=\sum_{m=1}^\infty p_m(v_mxv_m^*+v_m^*xv_m)p_m$ (the convergence of the latter
series is taken in the topology), we see that the last series \eqref{340} converges with respect to the
topology $\tau_{so}$ to some element $q_0^{(k)}yq_0^{(k)}$. Hence the series
$$\sum_{m=1}^\infty q_0^{(k)}\delta(p_m(v_mx_{n(m,k)}v_m^*+v_m^*x_{n(m,k)}v_m)p_m)q_0^{(k)}$$
converges with respect to the topology $\tau_{so}$ to some element $a_k\in\mathcal M$, and in addition, we have that
\begin{align}
\|a_k-q_0^{(k)}yq_0^{(k)}\|_{\mathcal M}\leq(k+1)^{-1}.             \label{341}
\end{align}

Next we show that
$$a_k=q_0^{(k)}\delta(y_k)q_0^{(k)}$$
where $y_k=\sum_{m=1}^\infty p_m(v_mx_{n(m,k)}v_m^*+v_m^*x_{n(m,k)}v_m)p_m$.

Denote by $w_m=p_m(v_mx_{n(m,k)}v_m^*+v_m^*x_{n(m,k)}v_m)p_m$, for each $m_1$
and $m_2$ in $\mathbb{N}$, by \eqref{330}, we have that
\begin{align}
&r_{m_1}q_0^{(k)}\delta(y_k)q_0^{(k)}r_{m_2}+r_{m_2}q_0^{(k)}\delta(y_k)q_0^{(k)}r_{m_1}\notag\\
=&q_0^{(k)}(r_{m_1}\delta(y_k)r_{m_2}+r_{m_2}\delta(y_k)r_{m_1})q_0^{(k)}\notag\\
=&q_0^{(k)}(\delta(r_{m_1}y_kr_{m_2}+r_{m_2}y_kr_{m_1})-\delta(r_{m_1})y_kr_{m_2}-\delta(r_{m_2})y_kr_{m_1}\notag\\
&-r_{m_1}y_k\delta(r_{m_2})-r_{m_2}y_k\delta(r_{m_1}))q_0^{(k)}\notag\\
=&q_0^{(k)}(-\delta(r_{m_1})w_{m_2}r_{m_2}-\delta(r_{m_2})w_{m_1}r_{m_1}\notag\\
&-r_{m_1}w_{m_1}\delta(r_{m_2})-r_{m_2}w_{m_2}\delta(r_{m_1}))q_0^{(k)}                                                    \label{342}
\end{align}

Since the series $\sum_{m=1}^\infty q_0^{(k)}\delta(p_m(v_mx_{n(m,k)}v_m^*+v_m^*x_{n(m,k)}v_m)p_m)q_0^{(k)}$ converges
with respect to the topology $\tau_{so}$, it follows that the series
$$\sum_{m=1}^\infty r_{m_1}q_0^{(k)}\delta(p_m(v_mx_{n(m,k)}v_m^*+v_m^*x_{n(m,k)}v_m)p_m)q_0^{(k)}r_{m_2}$$
also converges with respect to this topology, in addition, we have the following equalities:
\begin{align}
r_{m_1}a_kr_{m_2}+r_{m_2}a_kr_{m_1}=&\sum_{m=1}^\infty r_{m_1}(q_0^{(k)}\delta(w_m)q_0^{(k)})r_{m_2}+r_{m_2}(q_0^{(k)}\delta(w_m)q_0^{(k)})r_{m_1}\notag\\
=&\sum_{m=1}^\infty q_0^{(k)}(r_{m_1}\delta(w_m)r_{m_2}+r_{m_2}\delta(w_m)r_{m_1})q_0^{(k)}\notag\\
=&\sum_{m=1}^\infty q_0^{(k)}(\delta(r_{m_1}w_mr_{m_2})-\delta(r_{m_1})w_mr_{m_2}-\delta(r_{m_2})w_mr_{m_1}\notag\\
&-r_{m_1}w_m\delta(r_{m_2})-r_{m_2}w_m\delta(r_{m_1}))q_0^{(k)}\notag\\
=&q_0^{(k)}(-\delta(r_{m_1})w_{m_2}r_{m_2}-\delta(r_{m_2})w_{m_1}r_{m_1}\notag\\
&-r_{m_1}w_{m_1}\delta(r_{m_2})-r_{m_2}w_{m_2}\delta(r_{m_1}))q_0^{(k)}.                                                    \label{343}
\end{align}
By \eqref{342} and \eqref{343}, we have that
$$r_{m_1}q_0^{(k)}\delta(y_k)q_0^{(k)}r_{m_2}+r_{m_2}q_0^{(k)}\delta(y_k)q_0^{(k)}r_{m_1}=r_{m_1}a_kr_{m_2}+r_{m_2}a_kr_{m_1},$$
that is
$$r_{m_1}(\delta(y_k)-a_k)r_{m_2}=r_{m_2}(a_k-\delta(y_k))r_{m_1}.$$
It follows that
$$r_{m_1}(\delta(y_k)-a_k)r_{m_2}p_{m_2}=r_{m_2}(a_k-\delta(y_k))r_{m_1}p_{m_2}=0.$$
Hence, we can obtain that
\begin{align*}
r_{m_1}(\delta(y_k)-a_k)r_{m}=0
\end{align*}
for every $m\in\mathbb{N}$.
Suppose that $r(r_{m_1}(\delta(y_k)-a_k))$ is the right support of
$r_{m_1}(\delta(y_k)-a_k)$. Hence
$$r(r_{m_1}(\delta(y_k)-a_k))\leq1-r_m$$
for every $m\in\mathbb{N}$.
Therefore by \eqref{330}, we have that
$$r(r_{m_1}(\delta(y_k)-a_k))\leq\mathrm{inf}(1-r_m)=1-q_0^{(k)}.$$
It implies that $r_{m_1}(\delta(y_k)-a_k)q_0^{(k)}=0$ for every $m_1\in\mathbb{N}$.
Similarly, we have that
$q_0^{(k)}(\delta(y_k)-a_k)q_0^{(k)}=0$.
Since $q_0^{(k)}a_kq_0^{(k)}=a_k$, we can obtain that
$$a_k=q_0^{(k)}\delta(y_k)q_0^{(k)}.$$

By \eqref{341} we have that
\begin{align}
\|q_0^{(k)}(\delta(y_k)-y)q_0^{(k)}\|_{\mathcal M}\leq(k+1)^{-1}                   \label{344}
\end{align}
By \eqref{334} and \eqref{344}, we have that
\begin{align}
\|(k+1)y_kq_0^{(k)}\|_{\mathcal M}=\|(k+1)q_0^{(k)}y_k\|_{\mathcal M}\leq1                  \label{345}
\end{align}
and
\begin{align}
\|q_0^{(k)}\delta((k+1)y_k)q_0^{(k)}-(k+1)q_0^{(k)}yq_0^{(k)}\|_{\mathcal M}\leq1.                 \label{346}
\end{align}
By \eqref{345} and \eqref{346}, we know that
$$(k+1)q_0^{(k)}-q_0^{(k)}\delta((k+1)y_k)q_0^{(k)}\leq(k+1)q_0^{(k)}yq_0^{(k)}-q_0^{(k)}\delta((k+1)y_k)q_0^{(k)}\leq q_0^{(k)},$$
that is
\begin{align}
kq_0^{(k)}\leq q_0^{(k)}\delta((k+1)y_k)q_0^{(k)}.                                    \label{347}
\end{align}

Next we let $q_0=\mathrm{inf}_{k\geq1}q_0^{(k)}$ and $z_0=\mathrm{inf}_{k\geq1}z_0^{k}$.
It follows that
$p_0-q_0=\mathrm{sup}_{k\geq1}(p_0-q_0^{(k)})$.
By \eqref{333} and Condition $\mathbb{D}_6$, we can obtain that
$$\Delta(z_0(p_0-q_0))=\Delta(\mathrm{sup}_{k\geq1}z_0(p_0-q_0^{(k)}))\leq\sum_{k=1}^{\infty}\Delta(z_0(p_0-q_0^{(k)}))\leq\varphi(z_0),$$
by Condition $\mathbb{D}_1$, we know that $z_0(p_0-q_0)$ is a finite projection.
Moreover, by \eqref{345} and \eqref{347}, we obtain that
\begin{align}
\|(k+1)q_0y_k\|_{\mathcal M}=\|(k+1)y_k q_0\|_{\mathcal M}\leq1                                      \label{348}
\end{align}
and
\begin{align}
kq_0\leq q_0\delta((k+1)y_k)q_0                                \label{349}
\end{align}
for every $k$ in $\mathbb{N}$.

Since $\varphi$ is a $*$-isomorphism from $\mathcal Z(\mathcal M)$
onto $L^\infty(\Omega,\Sigma,\mu)$, by \eqref{332} we have that
$$\int_\Omega\varphi(1-z_0)d\mu=\int_\Omega \mathrm{sup}_{k\geq1}\varphi(1-z_0^{(k)})d\mu\leq\sum_{k=1}^{\infty}\int_\Omega\varphi(1-z_0^{(k)})d\mu\leq2^{-1},$$
it means that $z_0\neq0$. Since $\mathcal C_{p_0}=1$ and $\mathcal C_{z_0 p_0}= z_0\mathcal C_{p_0}=z_0\neq0$,
we have that $z_0p_0\neq0$, and there exists $n\in\mathbb{N}$ such that $z_0p_n\neq0$.
Since $z_0p_n\sim z_0p_m$, we have that $z_0p_m\neq0$ for every $m\in\mathbb{N}$. Hence, $z_0p_0$
is an infinite projection.
Since  $z_0(p_0-q_0)$ is finite, we know that $z_0q_0$ is a infinite projection.
By \cite[Proposition 6.3.7]{R.Kadison J. Ringrose}, we know that there exists a non-zero
central projection $e_0\leq z_0$ such that $e_0q_0$ is properly infinite,
in particular, there exist pairwise orthogonal projections
\begin{align}
e_n\leq e_0q_0~~\mathrm{and}~~e_n\sim e_0q_0                             \label{350}
\end{align}
for every $n$ in $\mathbb{N}$.
In addition,
\begin{align}
\int_\Omega\varphi(\mathcal C_{q_0}e_0)d\mu\neq0.                         \label{351}
\end{align}

For every $n$ in $\mathbb{N}$, the operator
$$b_n=\delta(e_n)e_n$$
is locally measurable, and there exists a sequence
$\{z_m^{(n)}\}\subset\mathcal P(\mathcal Z(\mathcal M))$
such that
${z_m^{(n)}}\uparrow1$ when $m\rightarrow\infty$
and $z_m^{(n)}b_n\in S(\mathcal M)$ for every $m$ in $\mathbb{N}$.
Since
$$\varphi(z_m^{(n)})\uparrow\varphi(1)=1_{L^\infty(\Omega)},$$
it follows that
$\int_\Omega\varphi(z_m^{(n)})d\mu\uparrow\mu(1_{L^\infty(\Omega)})=1$ when $m\rightarrow\infty$.
For every $n$ in $\mathbb{N}$, by \eqref{351},
there exists a central projection $z^{(n)}$ such that $z^{(n)}b_n\in S(\mathcal M)$ and
\begin{align}
1-2^{-n-1}\int_\Omega\varphi(\mathcal C_{q_0}e_0)d\mu<\int_\Omega\varphi(z^{(n)})d\mu.                       \label{352}
\end{align}
Consider the central projection
$$g_0={\inf_{n\geq1}}z^{(n)}.$$
Since $z^{(n)}b_n\in S(\mathcal M)$ and $g_0=g_0z^{(n)}$, we have that $g_0b_n\in S(\mathcal M)$
for every $n$ in $\mathbb{N}$. By \eqref{352}, we can obtain that
\begin{align*}
1-\int_{\Omega}\varphi(g_0)d\mu=&\int_{\Omega}\varphi(1-g_0)d\mu=\int_{\Omega}\mathrm{sup}\varphi(1-z^{(n)})d\mu\\
\leq&\sum_{n=1}^\infty\int_{\Omega}\varphi(1-z^{(n)})d\mu=\sum_{n=1}^\infty(1-\int_{\Omega}\varphi(z^{(n)})d\mu)\\
\leq&2^{-1}\int_\Omega\varphi(\mathcal C_{q_0}e_0)d\mu.
\end{align*}
It implies that
$$1-2^{-1}\int_\Omega\varphi(\mathcal C_{q_0}e_0)d\mu\leq\int_{\Omega}\varphi(g_0)d\mu.$$
Thus
\begin{align}
1+2^{-1}\int_\Omega\varphi(\mathcal C_{q_0}e_0)d\mu\leq\int_\Omega\varphi(g_0)d\mu+\int_\Omega\varphi(\mathcal C_{q_0}e_0)d\mu.                  \label{353}
\end{align}
By \eqref{351} and \eqref{353},
it follows that $\int_\Omega\varphi(g_0\mathcal C_{q_0}e_0)d\mu>0$,
that is, $g_0\mathcal C_{q_0}e_0\neq0$ and so $g_0e_0q_0\neq0$.
Since $e_0q_0$ is a properly infinite projection, we have that $g_0e_0q_0$ is properly, since $g_0e_n\sim g_0e_0q_0$, we have that $g_0e_n$
is a properly infinite projection for every $n$ in $\mathbb{N}$. Since
$$\mathcal C_{g_0e_n}=g_0C_{e_n}\leq g_0\mathcal C_{q_0e_0}=g_0\mathcal C_{q_0}e_0,$$
it follows that $ze_n$ is also a properly infinite projection for every $0\neq z\in\mathcal P(\mathcal Z(\mathcal M))$
with $z\leq g_0\mathcal C_{q_0}e_0$. In fact, if $z'\in\mathcal P(\mathcal Z(\mathcal M))$
and $z'ze_n\neq0$, then $0\neq z'ze_n=(z'z\mathcal C_{q_0}e_0)g_0e_n$. Since
the projection $g_0e_n$ is properly infinite, we have
$z'z\mathcal C_{q_0}e_0)g_0e_n\notin\mathcal P_{fin}(\mathcal M)$.
Hence, the projection $ze_n$ is also a properly infinite projection.

We may assume that $g_0\mathcal C_{q_0}e_0=1$, otherwise, we replace the algebra
$\mathcal M$ with the algebra $g_0\mathcal C_{q_0}e_0\mathcal M$. In this case,
we also assume that $b_n\in S(\mathcal M)$, $e_n\sim q_0$, $\mathcal C_{e_n}=1$
and $ze_n$ is a properly infinite projection for every non-zero central projection
$z\in\mathcal P(\mathcal Z(\mathcal M))$.

Since $b_n\in S(\mathcal M)$, fixed $n\in\mathbb{N}$,
there exists a sequence $\{p_m^{(n)}\}_{m=1}^\infty\subset\mathcal P_{fin}(\mathcal M)$
such that ${p_m^{(n)}}\downarrow0$ when $m\rightarrow\infty$ and $b_n(1-p_m^{(n)})\in\mathcal M$
for every $m\in\mathbb{\mathrm{N}}$. By Condition $\mathbb{D}_7$, we have that
$\Delta(p_m^{(n)})\in L^0(\Omega,\Sigma,\mu)$ and $\Delta(p_m^{(n)})\downarrow0$,
it follows that $\{\Delta(p_m^{(n)})\}_{n=1}^\infty$ converges in measure $\mu$
to zero.
Hence we can choose a central projection $f_n$ and a finite projection $s_n=p_{m_n}^{(n)}$ as to guarantee
$\Delta(f_ns_n)<2^{-n}\varphi(f_n)$, $1-2^{-n-1}<\int_\Omega\varphi(f_n)d\mu$ and
$f_nb_n(1-s_n)\in\mathcal M$ for every $n\in\mathbb{N}$.

Denote by
$$f={\inf_{n\geq1}}f_n~~\mathrm{and}~~s={\sup_{n\geq1}}s_n.$$
By Condition $\mathbb{D}_6$, we have that
$$1/2<\int_\Omega\varphi(f)d\mu~~\mathrm{and}~~\Delta(fs)\leq\sum_{n=1}^\infty\Delta(fs_n)\leq\varphi(f),$$
it means that $f\neq0$ and by Condition $\mathbb{D}_1$, we know that $fs$ is a finite projection.
Moreover, since $f\leq f_n$ and $(1-s)\leq(1-s_n)$, it follows that
$fb_n(1-s)\in\mathcal M$ for every $n\in\mathbb{N}$.

Let $t=f(1-s)$ and $g_n=f(e_n\wedge(1-s))$ for every $n\in\mathbb{N}$.
By \eqref{350}, we have that
\begin{align}
g_n\leq fe_n\leq q,~~b_ng_n\in\mathcal M~~\mathrm{and}~~g_n\leq t                 \label{354}
\end{align}
for every $n\in\mathbb{N}$.
In addition, we can obtain that
$$fe_n-g_n=f(e_n-e_n\wedge(1-s))\sim f(e_n\vee(1-s)-(1-s))\leq fs,$$
that is $fe_n-g_n$ is a finite projection. Hence, for every non-zero central projection $z\leq f$,
we have that the projection $ze_n-zg_n$ is a finite projection.
Since the projection $ze_n$ is infinite, the projection $zg_n$ is also infinite,
that is
\begin{align}
zg_n\notin\mathcal P_{fin}(\mathcal M)                                                 \label{359}
\end{align}
for every $0\neq z\in\mathcal P(\mathcal Z(\mathcal M))$ and every $n\in\mathbb{N}$.

Since $b_nt=fb_n(1-s)\in\mathcal M$, there exists an increasing sequence $\{l_n\}\subset\mathbb{N}$
such that $l_n>n+2\|b_n\|_{\mathcal M}$ for every $n\in\mathbb{N}$.
By \eqref{348} and \eqref{354}, we have that
\begin{align*}
\|g_n(l_n+1)y_{l_n}\delta(e_n)e_ng_n\|_{\mathcal M}&\leq\|g_n(l_n+1)y_{l_n}\|_{\mathcal M}\|\delta(e_n)e_ng_n\|_{\mathcal M}\\
&\leq\|q_0(l_n+1)y_{l_n}\|_{\mathcal M}\|\delta(e_n)e_nt\|_{\mathcal M}\\
&<(l_n-n)/2.
\end{align*}
Hence,
\begin{align}
\|g_ne_n\delta(e_n)(l_n+1)y_{l_n}g_n+g_n(l_n+1)y_{l_n}\delta(e_n)e_ng_n\|_{\mathcal M}\leq l_n-n.               \label{355}
\end{align}

For every $x=x^*\in\mathcal M$, we have that $-\|x\|_{\mathcal M}1\leq x\leq\|x\|_{\mathcal M}1$,
moreover, $-g_n\|x\|_{\mathcal M}\leq g_nxg_n\leq g_n\|x\|_{\mathcal M}$. By \eqref{355}, it follows that
\begin{align}
g_ne_n\delta(e_n)(l_n+1)y_{l_n}g_n+g_n(l_n+1)y_{l_n}\delta(e_n)e_ng_n\geq(n-l_n)g_n.               \label{356}
\end{align}

Since $e_ne_m=0$ when $n\neq m$. By \eqref{348} and \eqref{354},
we have that the series $\sum_{n=1}^\infty e_n(l_n+1)y_{l_n}e_n$ converges with the topology
$\tau_{so}$ to a self-adjoint operator $h_0\in\mathcal M$ such that
$$\|h_0\|_{\mathcal{M}}\leq{\sup_{n\geq1}}\|e_n(l_n+1)y_{l_n}e_n\|_{\mathcal{M}}\leq1.$$

By \eqref{349}, \eqref{354} and \eqref{356}, we have that
\begin{align*}
ng_n=&l_ng_n+(n-l_n)g_n\\
\leq&g_n(l_n+1)\delta(y_{l_n})g_n+g_ne_n\delta(e_n)(l_n+1)y_{l_n}g_n+g_n(l_n+1)y_{l_n}\delta(e_n)e_ng_n\\
=&(l_n+1)(g_n\delta(y_{l_n})g_n+g_ne_n\delta(e_n)y_{l_n}g_n+g_ny_{l_n}\delta(e_n)e_ng_n)\\
=&(l_n+1)(g_n\delta(y_{l_n})g_n+g_n\delta(e_n)y_{l_n}e_ng_n+g_ne_ny_{l_n}\delta(e_n)g_n)\\
=&(l_n+1)g_n\delta(e_ny_{l_n}e_n)g_n\\
=&(l_n+1)(\delta(g_ne_ny_{l_n}e_ng_n)-\delta(g_n)e_ny_{l_n}e_ng_n-g_ne_ny_{l_n}e_n\delta(g_n))\\
=&\delta(g_nh_0g_n)-\delta(g_n)h_0g_n-g_nh_0\delta(g_n)\\
=&g_n\delta(h_0)g_n.
\end{align*}
Thus,
\begin{align}
ng_n\leq g_n\delta(h_0)g_n                       \label{357}
\end{align}
for every $n$ in $\mathbb{N}$.

Let $g_n^{(0)}=g_n\wedge E_{n-1}(\delta(h_0))$, $n\in\mathbb{N}$,
where $\{E_{\mu}(\delta(h_0))\}$ is the spectral family of projections for self-adjoint operator $\delta(h_0)$.
For every $n$ in $\mathbb{N}$, by \eqref{357} we have that
\begin{align*}
ng_n^{(0)}=&ng_n^{(0)}g_ng_n^{(0)}\leq g_n^{(0)}(g_n\delta(h_0)g_n)g_n^{(0)}\\
=&g_n^{(0)}\delta(h_0)g_n^{(0)}=g_n^{(0)}E_{n-1}(\delta(h_0))\delta(h_0)g_n^{(0)}\\
\leq&g_n^{(0)}(n-1)E_{n-1}(\delta(h_0))g_n^{(0)}=(n-1)g_n^{(0)}.
\end{align*}
Hence, $g_n\wedge E_{n-1}(\delta(h_0))=g_n^{(0)}=0$, it implies that
$$g_n=g_n-g_n\wedge E_{n-1}(\delta(h_0))\sim g_n\vee E_{n-1}(\delta(h_0))-E_{n-1}(\delta(h_0))\leq1-E_{n-1}(\delta(h_0)).$$
Thus
$$g_n\preceq1-E_{n-1}(\delta(h_0)).$$
By \eqref{354}, we have that $g_n\leq fg_n\preceq f(1-E_{n-1}(\delta(h_0)))$,
and by Condition $\mathbb{D}_3$, we can obatain that
\begin{align}
\Delta(g_n)\leq\Delta(f(1-E_{n-1}(\delta(h_0)))).                                                 \label{358}
\end{align}
for every $n$ in $\mathbb{N}$.

Since $|f\delta(h_0)|\in LS(\mathcal{M})$, there exists a non-zero central projection
$f_0\leq f$, such that $|f_0\delta(h_0)|$ is a self-adjoint element in $S(\mathcal{M})$.
By \cite{M. Muratov}, we can
select $\lambda_0>0$ such that
$$(f_0-E_{\lambda}(|f_0\delta(h_0)|))\in\mathcal P_{fin}(\mathcal M)$$
for every $\lambda>\lambda_0$. Thus
$\Delta(f_0(1-E_{\lambda}(|f_0\delta(h_0)|)))\in L_+^0(\Omega,\Sigma,\mu)$ for every $\lambda>\lambda_0$.

Since $f_0(1-E_{\lambda}(|f_0\delta(h_0)|))=f_0(1-E_{\lambda}(|\delta(h_0)|))$,
by \eqref{358} we have that
$$\Delta(f_0g_n)\in L^0_+(\Omega,\Sigma,\mu)$$
for every $n\geq\lambda_0+1$, by Condition $\mathbb{D}_1$, we know that $f_0g_n$
is a finite projection. By \eqref{359},
we know that $f_0g_n$ is an infinite projection,
this leads to a contradiction.

Hence, every Jordan derivation $\delta$ from $LS(\mathcal M)$ into itself is continuous
with respect the local measure topology $t(\mathcal{M})$.
\end{proof}

\section{Extension of a Jordan derivation up to a Jordan derivation on $LS(\mathcal M)$}
In this section, we construct an extension of a Jordan derivation
from a von Neumann algebra $\mathcal M$ into $LS(\mathcal M)$ up to a Jordan
derivation from $LS(\mathcal M)$ into itself.

Let $\mathcal M$ be a von Neumann algebra and $\{z_n\}_{n=1}^{\infty}$ be a sequence
of central projections in $\mathcal M$ such that $z_n\uparrow1$.
A sequence $\{x_n\}_{n=1}^{\infty}$ in $LS(\mathcal M)$ is called \emph{consistent}
with the sequence $\{z_n\}_{n=1}^{\infty}$ if $x_mz_n=x_nz_n$
for each $m,n\in\mathbb{N}$ with $m>n$.

\begin{lemma}\cite[Proposition 4.1]{Ber3}
Suppose that $\mathcal M$ is a von Neumann algebra, $\{x_n\}_{n=1}^{\infty}$,
$\{y_n\}_{n=1}^{\infty}$ are two sequences in $LS(\mathcal M)$,
and $\{z_n\}_{n=1}^{\infty}$, $\{z_n'\}_{n=1}^{\infty}$ are two sequences
of central projections in $\mathcal M$ such that $z_n\uparrow1$
and $z_n'\uparrow1$, respectively.
If $\{x_n\}_{n=1}^{\infty}$ and $\{y_n\}_{n=1}^{\infty}$
consistent with $\{z_n\}_{n=1}^{\infty}$
and $\{z_n'\}_{n=1}^{\infty}$, respectively, then we have the following two statements:\\
$(1)$ there exists a unique element $x\in LS(\mathcal M)$ such that $xz_n=z_nx$ for every $n\in\mathbb{N}$,
moreover, $x_n\xrightarrow[n\rightarrow\infty]{t(\mathcal M)}x$;\\
$(2)$ if $x_nz_nz_m'=y_mz_nz_m'$ for each $m,n\in\mathbb{N}$, then $(x_nz_n-y_nz_n')\xrightarrow[n\rightarrow\infty]{t(\mathcal M)}0$.
\end{lemma}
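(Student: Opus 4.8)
The plan is to realize $x$ as the $t(\mathcal M)$-limit of the truncations $x_nz_n$, using that $(LS(\mathcal M),t(\mathcal M))$ is complete and that multiplication by a fixed central projection is $t(\mathcal M)$-continuous (because $LS(\mathcal M)$ is a topological $*$-algebra). Here I read the characterizing relation in $(1)$ as $xz_n=x_nz_n$ for every $n$, which is what the construction and the later applications need.

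For $(1)$, the decisive step is to show that $\{x_nz_n\}_{n\ge1}$ is $t(\mathcal M)$-Cauchy. If $n>m$, consistency of $\{x_n\}$ with $\{z_n\}$ gives $x_nz_m=x_mz_m$, hence $x_nz_n-x_mz_m=x_n(z_n-z_m)$, which is annihilated on the right by $z_m$. So, given a basic neighbourhood $V(B,\varepsilon,\gamma)$ of $0$ with $\mu(B)<\infty$, I would take $p=z_m\in\mathcal P(\mathcal M)$ and the central projection $z=z_m$: then $x_n(z_n-z_m)p=0\in\mathcal M$ with norm $0\le\varepsilon$, $\Delta(zp^\bot)=\Delta(z_m(1-z_m))=0\le\varepsilon\varphi(z)$, and $\varphi(z^\bot)=\varphi(1-z_m)\in W(B,\varepsilon,\gamma)$ once $m$ is large (since $\varphi(1-z_m)\downarrow0$ $\mu$-a.e., so $\mu(\{\varphi(1-z_m)=1\}\cap B)\to0$ and one may take $E=B\setminus\{\varphi(1-z_m)=1\}$). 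Thus $x_nz_n-x_mz_m\in V(B,\varepsilon,\gamma)$ whenever $n>m\ge m_0$, with $m_0$ depending only on $B,\varepsilon,\gamma$; so $\{x_nz_n\}$ is Cauchy and, by completeness, converges in $t(\mathcal M)$ to some $x\in LS(\mathcal M)$.

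Then I would dispatch the rest of $(1)$. That $xz_n=x_nz_n$: by $t(\mathcal M)$-continuity of right multiplication by $z_n$ we have $xz_n=\lim_kx_kz_kz_n$, and for $k\ge n$ the term equals $x_kz_n=x_nz_n$, so the limit is $x_nz_n$. That $x_n\xrightarrow{t(\mathcal M)}x$: decompose $x_n-x=x_n(1-z_n)+(x_nz_n-x)$; the second summand tends to $0$ by construction, and $x_n(1-z_n)\in V(B,\varepsilon,\gamma)$ for $n$ large by the very same estimate (now with $p=z=z_n$ and using $x_n(1-z_n)z_n=0$). Uniqueness: if $x'\in LS(\mathcal M)$ also satisfies $x'z_n=x_nz_n$ for all $n$, then $(x-x')z_n=0$ for all $n$, so the left support of $x-x'$ is dominated by $\inf_n(1-z_n)=0$, whence $x=x'$.

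For $(2)$, apply $(1)$ to $\{x_n\},\{z_n\}$ and to $\{y_n\},\{z_n'\}$ to obtain $x,y\in LS(\mathcal M)$ with $x_nz_n\xrightarrow{t(\mathcal M)}x$ and $y_nz_n'\xrightarrow{t(\mathcal M)}y$; then $x_nz_n-y_nz_n'\xrightarrow{t(\mathcal M)}x-y$, so it suffices to prove $x=y$. From $xz_n=x_nz_n$, $yz_m'=y_mz_m'$ and the hypothesis $x_nz_nz_m'=y_mz_nz_m'$ we get $(x-y)z_nz_m'=0$ for all $m,n$; fixing $n$ and letting $m$ vary, the centrality of the $z_m'$ and $z_m'\uparrow1$ force $(x-y)z_n=0$, and then letting $n$ vary forces $x-y=0$ by the same left-support argument. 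I do not expect a serious obstacle; the one point worth a moment's care is the Cauchy estimate, where the trick is to choose the auxiliary projection $p$ equal to $z_m$ so that the norm-boundedness and dimension conditions in the definition of $V(B,\varepsilon,\gamma)$ hold trivially, reducing matters to the elementary fact that $\varphi(1-z_m)$ is eventually small in measure because $z_m\uparrow1$.
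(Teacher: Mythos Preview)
The paper does not prove this lemma: it is quoted verbatim from \cite[Proposition~4.1]{Ber3} and no argument is given here, so there is no in-paper proof to compare your proposal against.

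That said, your argument is sound and is essentially the standard one. Your reading of the characterizing relation in $(1)$ as $xz_n=x_nz_n$ (rather than the printed $xz_n=z_nx$, which is automatic for central $z_n$) is the right interpretation and is what the subsequent applications in the paper require. The Cauchy estimate via the choice $p=z=z_m$ in the definition of $V(B,\varepsilon,\gamma)$ is clean, and the uniqueness and part $(2)$ follow exactly as you indicate; the only cosmetic slip is that from $(x-x')z_n=0$ one bounds the \emph{right} support of $x-x'$ by $1-z_n$, but since $z_n$ is central the same holds on the left, so your conclusion stands.
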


In the following we extend
a Jordan derivation from $S(\mathcal M)$ into $LS(\mathcal M)$
up to a Jordan derivation from $LS(\mathcal M)$
into itself.
For every $x$ in $LS(\mathcal M)$,
we can select a sequence
$\{z_n\}_{n=1}^{\infty}$ of non-zero central projections
in $\mathcal M$ such that $z_n\uparrow1$ and
$xz_n\in S(\mathcal M)$ for every $n$ in $\mathbb{N}$,
By Lemma 3.2 (2) we know that $\delta(xz_n)z_m=\delta(xz_nz_m)=\delta(xz_nz_n)=\delta(xz_n)z_n$
for each $m,n\in\mathbb{N}$ with $m>n$. It means that the sequence
$\{\delta(xz_n)\}_{n=1}^{\infty}$
is consistent with the sequence $\{z_n\}_{n=1}^{\infty}$. By Proposition 4.1 (1), there exists
a unique element $y_x\in LS(\mathcal M)$ such that $\delta(xz_n)\xrightarrow[n\rightarrow\infty]{t(\mathcal M)}y_x$.
Hence we can define a mapping from $LS(\mathcal M)$ into itself by
$$\tilde{\delta}(x)=y_x.$$
By Proposition 4.1 (2), we know that
$\tilde{\delta}(x)$ does not depend on a choice of a
sequence $\{z_n\}_{n=1}^{\infty}\subset\mathcal P(\mathcal Z(\mathcal M))$.
Thus the mapping $\tilde{\delta}$ is well-defined.
Moreover, if $x\in S(\mathcal M)$, we let $z_n=1$
for every $n\in\mathbb{N}$, then $\tilde{\delta}(x)=\delta(x)$.

\begin{theorem}
Suppose that $\mathcal M$ is a von Neumann algebra. If $\delta$ is a
Jordan derivation from $S(\mathcal M)$ into $LS(\mathcal M)$,
then $\tilde{\delta}$ is a unique Jordan derivation from $LS(\mathcal M)$
into $LS(\mathcal M)$ such that $\tilde{\delta}(x)=\delta(x)$ for every $x\in S(\mathcal M)$.
\end{theorem}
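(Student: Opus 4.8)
The plan is to exploit the approximation built into the definition of $\tilde\delta$: the element $\tilde\delta(x)$ is the $t(\mathcal M)$-limit of $\delta(xz_n)$ along any sequence of central projections $z_n\uparrow 1$ with $xz_n\in S(\mathcal M)$, and, by Proposition 4.1(2), this value does not depend on the chosen sequence. Since the equality $\tilde\delta|_{S(\mathcal M)}=\delta$ is already observed before the statement, what remains is to check that $\tilde\delta$ is linear, that it satisfies the Jordan identity, and that it is the only such extension. For linearity, given $x,y\in LS(\mathcal M)$ and $\lambda\in\mathbb C$ I would pick central projections $z_n\uparrow 1$, $z_n'\uparrow 1$ with $xz_n,\,yz_n'\in S(\mathcal M)$ and set $w_n=z_nz_n'$; then $w_n\uparrow 1$ and $xw_n,\,yw_n,\,(x+\lambda y)w_n\in S(\mathcal M)$ because $S(\mathcal M)$ is an algebra containing $\mathcal M$. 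Computing $\tilde\delta$ of $x$, $y$ and $x+\lambda y$ along the \emph{same} sequence $\{w_n\}$ and using linearity of $\delta$ together with continuity of the vector-space operations of $(LS(\mathcal M),t(\mathcal M))$ yields $\tilde\delta(x+\lambda y)=\lim_n\delta(xw_n+\lambda yw_n)=\tilde\delta(x)+\lambda\tilde\delta(y)$.

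For the Jordan identity I would fix $x\in LS(\mathcal M)$ and central projections $z_n\uparrow 1$ with $xz_n\in S(\mathcal M)$. Since $z_n$ is central, $x^2z_n=(xz_n)^2\in S(\mathcal M)$, so $\tilde\delta(x^2)$ is the $t(\mathcal M)$-limit of $\delta((xz_n)^2)=\delta(xz_n)(xz_n)+(xz_n)\delta(xz_n)$, the last equality holding because $\delta$ is a Jordan derivation on $S(\mathcal M)$. The key step is to apply Lemma 3.1(3) with the central projection $z_n^{\bot}$ and the element $xz_n\in S(\mathcal M)$: this gives $z_n^{\bot}\delta(xz_n)=\delta(z_n^{\bot}xz_n)=\delta(0)=0$, hence $\delta(xz_n)=z_n\delta(xz_n)$, and then, $z_n$ being central, $\delta(xz_n)(xz_n)=\delta(xz_n)\,x$ and $(xz_n)\delta(xz_n)=x\,\delta(xz_n)$. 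Therefore $\delta((xz_n)^2)=\delta(xz_n)x+x\delta(xz_n)$; letting $n\to\infty$, using $\delta(xz_n)\to\tilde\delta(x)$ in $t(\mathcal M)$ and the continuity of left and right multiplication by the \emph{fixed} element $x$, I obtain $\tilde\delta(x^2)=\tilde\delta(x)x+x\tilde\delta(x)$.

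For uniqueness, let $D$ be any Jordan derivation from $LS(\mathcal M)$ into itself with $D|_{S(\mathcal M)}=\delta$. Since $\mathcal P(\mathcal Z(\mathcal M))\subset LS(\mathcal M)$, Lemma 3.1(3) applies to $D$ and to $\tilde\delta$, so for $x\in LS(\mathcal M)$ and central $z_n\uparrow 1$ with $xz_n\in S(\mathcal M)$ we get $z_nD(x)=D(z_nx)=D(xz_n)=\delta(xz_n)=\tilde\delta(xz_n)=z_n\tilde\delta(x)$. Hence $z_n\bigl(D(x)-\tilde\delta(x)\bigr)=0$ for every $n$; since $\sup_nz_n=1$, the left support of $D(x)-\tilde\delta(x)$ is dominated by $\inf_n(1-z_n)=0$, so $D(x)=\tilde\delta(x)$. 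Equivalently, one may let $n\to\infty$, using that $z_n\to 1$ in $t(\mathcal M)$ (which itself follows from Proposition 4.1(1) applied to the consistent sequence $\{z_n\}$).

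The only genuinely delicate point is the passage to the limit in the second paragraph: expanding $\delta((xz_n)^2)$ and taking limits naively would appear to require joint continuity of multiplication in $(LS(\mathcal M),t(\mathcal M))$. The identity $\delta(xz_n)=z_n\delta(xz_n)$ furnished by Lemma 3.1(3) is exactly what rewrites the right-hand side so that only multiplication by the fixed element $x$ intervenes, after which separate continuity — available since $t(\mathcal M)$ makes $LS(\mathcal M)$ a topological algebra — is enough. Everything else is routine bookkeeping with central projections together with the well-definedness established before the statement.
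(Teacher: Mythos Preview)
Your proof is correct and follows essentially the same route as the paper's: linearity via a common sequence $z_nz_n'$, the Jordan identity by passing to the limit in $\delta((xz_n)^2)=\delta(xz_n)(xz_n)+(xz_n)\delta(xz_n)$, and uniqueness via Lemma~3.1(3) and $z_n\uparrow 1$. The only difference is that the paper takes the limit directly using that $(LS(\mathcal M),t(\mathcal M))$ is a topological $*$-algebra (so multiplication is jointly continuous, from $xz_n\to x$ and $\delta(xz_n)\to\tilde\delta(x)$), whereas your rewrite $\delta(xz_n)(xz_n)=\delta(xz_n)x$ via Lemma~3.1(3) is a nice but unnecessary extra step.
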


\begin{proof}
First, we prove that $\tilde{\delta}$ is a linear mapping from $LS(\mathcal M)$ into itself.
Suppose that $x$ and $y$ are two elements in $LS(\mathcal M)$, we can select two sequences
$\{z_n\}_{n=1}^{\infty}$ and $\{z_n'\}_{n=1}^{\infty}$ of non-zero central projections
in $\mathcal M$ such that $z_n\uparrow1,z_n'\uparrow1$,
$xz_n,yz_n'\in S(\mathcal M)$ for every $n$ in $\mathbb{N}$, respectively.
It is clear that $\{z_nz_n'\}_{n=1}^{\infty}$ is also a sequence of non-zero central projections
in $\mathcal M$ such that
$$z_nz_n'\uparrow1,~~xz_nz_n',~~yz_nz_n'~~\mathrm{and}~~(x+y)z_nz_n'\in S(\mathcal M)$$
for every $n$ in $\mathbb{N}$.
It follows that
\begin{align*}
\tilde{\delta}(x+y)=&t(\mathcal M)-{\lim_{n \to \infty}\delta((x+y)z_nz_n')}\\
=&(t(\mathcal M)-{\lim_{n \to \infty}\delta(xz_nz_n')})+(t(\mathcal M)-{\lim_{n \to \infty}\delta(yz_nz_n')})\\
=&\tilde{\delta}(x)+\tilde{\delta}(y).
\end{align*}
Similarly, we can show that $\tilde{\delta}(\lambda x)=\lambda\tilde{\delta}(x)$
for every $\lambda\in\mathbb{C}$. Thus $\tilde{\delta}$ is a linear mapping
from $LS(\mathcal M)$ into itself.

Next, we prove that $\tilde{\delta}$ is a Jordan derivation from $LS(\mathcal M)$ into itself.
Since $xz_n\xrightarrow[n\rightarrow\infty]{t(\mathcal M)}x$,
$\delta(xz_n)\xrightarrow[n\rightarrow\infty]{t(\mathcal M)}\tilde{\delta}(x)$,
and $x^2z_n\in S(\mathcal M)$, we have that
\begin{align*}
\tilde{\delta}(x^2)=&t(\mathcal M)-{\lim_{n \to \infty}\delta(x^2z_n)}
=t(\mathcal M)-{\lim_{n \to \infty}\delta(xz_nxz_n)}\\
=&t(\mathcal M)-{\lim_{n \to \infty}(\delta(xz_n)(xz_n)+(xz_n)\delta(xz_n))}\\
=&\tilde{\delta}(x)x+x\tilde{\delta}(x).
\end{align*}
It means that $\tilde{\delta}$ is a Jordan derivation from $LS(\mathcal M)$ into itself.
Clearly, $\tilde{\delta}(x)=\delta(x)$
for every $x$ in $S(\mathcal M)$.

Finally, we show the uniqueness of the $\tilde{\delta}$.
Suppose that $\delta_1$ is also a Jordan derivation from $LS(\mathcal M)$
into itself such that $\delta_1(x)=\delta(x)$ for every $x$ in $S(\mathcal{M})$.

Let $x$ be in $LS(\mathcal M)$, $\{z_n\}_{n=1}^{\infty}$ be a sequence of
non-zero central projections in $\mathcal M$ such that $z_n\uparrow1$,
and $xz_n\in S(\mathcal M)$ for every $n$ in $\mathbb{N}$. By Lemma 3.2 (2) and Proposition
4.1 (1), we have that
\begin{align*}
\tilde{\delta}(x)&=t(\mathcal M)-{\lim_{n \to \infty}\delta(xz_n)}
=t(\mathcal M)-{\lim_{n \to \infty}\delta_1(xz_n)}\\
&=t(\mathcal M)-{\lim_{n \to \infty}\delta_1(x)z_n}=\delta_1(x)
\end{align*}
\end{proof}

Next, we extend a Jordan derivation
from $\mathcal M$ into $LS(\mathcal M)$ up to a Jordan derivation
from $S(\mathcal M)$ into $LS(\mathcal M)$. Let $x$ be in $LS(\mathcal M)$ and
$x=u|x|$ be the polar decomposition of $x$, $l(x)=uu^*$ is the
left support of $x$ and $r(x)=u^*u$ is  the
right support of $x$, clearly, $l(x)\sim u(x)$.
Denote by $s(x)=l(x)\vee r(x)$.

The following lemmas will be used repeated.

\begin{lemma}
Suppose that $\Delta$ is a dimension function of a von Neumann algebra $\mathcal M$.
If $\delta$ is a Jordan derivation from $\mathcal M$ into $LS(\mathcal M)$,
then we have the following inequality
$$\Delta(s(\delta(x)))\leq6\Delta(s(x))$$
for every $x\in\mathcal M$.
\end{lemma}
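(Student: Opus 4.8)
The plan is to put $e:=s(x)=l(x)\vee r(x)$, the smallest projection in $\mathcal P(\mathcal M)$ with $ex=xe=x$ (so that also $l(x)\le e$ and $r(x)\le e$), and to expand $\delta(x)=\delta(exe)$ by a Jordan triple identity so that each of the resulting three summands has support that is $\Delta$-dominated by a small multiple of $\Delta(e)$.

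The first step is to establish the identity
$$\delta(eae)=\delta(e)\,a\,e+e\,\delta(a)\,e+e\,a\,\delta(e),\qquad e\in\mathcal P(\mathcal M),\ a\in\mathcal M.$$
Since $e^2=e$ one has $2\,eae=e(ea+ae)+(ea+ae)e-(ea+ae)$; applying Lemma 3.1(1) twice to the right-hand side and using $\delta(e)=\delta(e)e+e\delta(e)$ to cancel all cross-terms produces the identity after a short computation (no $2$-torsion issue arises, as $LS(\mathcal M)$ is a $\mathbb C$-algebra). Taking $a=x$ and using $ex=xe=x$ gives
$$\delta(x)=\delta(e)\,x+e\,\delta(x)\,e+x\,\delta(e).$$

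The second step is to estimate the support of each summand. Since $l$ and $r$ are sub-additive on sums, $s(\delta(x))\le s(\delta(e)x)\vee s(e\delta(x)e)\vee s(x\delta(e))$, so by sub-additivity of $\Delta$ on finite suprema (a consequence of $\mathbb{D}_6$) it is enough to bound each term. For $e\delta(x)e$ both one-sided supports lie below $e$, hence $\Delta(s(e\delta(x)e))\le\Delta(e)$. For $\delta(e)x$ one has $r(\delta(e)x)=l\big((\delta(e)x)^{*}\big)=l(x^{*}\delta(e)^{*})\le l(x^{*})=r(x)\le e$; since the left and right supports of an element of $LS(\mathcal M)$ are Murray--von Neumann equivalent through the partial isometry of its polar decomposition, which lies in $\mathcal M$, condition $\mathbb{D}_3$ gives $\Delta(l(\delta(e)x))=\Delta(r(\delta(e)x))\le\Delta(e)$, so $\Delta(s(\delta(e)x))\le2\Delta(e)$. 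Symmetrically, $l(x\delta(e))\le l(x)\le e$ gives $\Delta(s(x\delta(e)))\le2\Delta(e)$. Adding the three estimates, $\Delta(s(\delta(x)))\le2\Delta(e)+\Delta(e)+2\Delta(e)=5\Delta(e)\le6\Delta(s(x))$.

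The computation is short, and the only delicate point is the bookkeeping with one-sided supports: in each summand one must absorb the spare copies of $e$ via $ex=xe=x$ so that one of the two one-sided supports sits below $e$, and then transfer the bound to the other side using $\mathbb{D}_3$. The remaining facts used — monotonicity of $\Delta$ and its sub-additivity over finite suprema — follow directly from $\mathbb{D}_1$--$\mathbb{D}_6$, and neither continuity of $\delta$ nor the type decomposition of $\mathcal M$ is needed.
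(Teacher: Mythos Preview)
Your argument is correct and follows essentially the same route as the paper: write $\delta(x)=\delta(s(x))x+s(x)\delta(x)s(x)+x\delta(s(x))$ via the Jordan triple identity, then bound the one-sided supports of each summand using $\mathbb{D}_3$ and the sub-additivity of $\Delta$ on suprema. Your bookkeeping is in fact slightly tighter than the paper's, since for the middle term $e\delta(x)e$ you observe that both one-sided supports lie below $e$ so that $s(e\delta(x)e)\le e$ contributes only $\Delta(e)$ rather than $2\Delta(e)$, yielding $5\Delta(s(x))$ where the paper counts all six one-sided supports separately to reach $6\Delta(s(x))$.
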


\begin{proof}
Let $x$ be in $\mathcal M$, we have the following three statements:
\begin{align}
l(\delta(s(x))x)\sim r(\delta(s(x))x)=r(\delta(s(x))xs(x))\leq s(x);         \label{401}
\end{align}
\begin{align}
r(x\delta(s(x)))\sim l(x\delta(s(x)))=l(s(x)x\delta(s(x)))\leq s(x);           \label{402}
\end{align}
and
\begin{align}
r(s(x)\delta(x)s(x))\sim l(s(x)\delta(x)s(x))\leq s(x).                      \label{403}
\end{align}
By \eqref{401}, we have that
$$l(\delta(s(x))x)\preceq s(x)~~\mathrm{and}~~r(\delta(s(x))x)\preceq s(x);$$
by \eqref{402}, we have that
$$l(x\delta(s(x)))\preceq s(x)~~\mathrm{and}~~r(x\delta(s(x)))\preceq s(x);$$
by \eqref{403}, we have that
$$r(s(x)\delta(x)s(x))\preceq s(x).$$

It implies that the following three inequalities:
\begin{align}
\Delta(l(\delta(s(x))x))=\Delta(r(\delta(s(x))x))\leq\Delta(s(x));                     \label{404}
\end{align}
\begin{align}
\Delta(l(x\delta(s(x))))=\Delta(r(x\delta(s(x))))\leq \Delta(s(x));                     \label{405}
\end{align}
and
\begin{align}
\Delta(r(s(x)\delta(x)s(x)))=\Delta(l(s(x)\delta(x)s(x)))\leq \Delta(s(x)).                     \label{406}
\end{align}
Since $\delta$ is a Jordan derivation and by Lemma 3.1 (2) we have that
\begin{align*}
\delta(x)=\delta(s(x)xs(x))=&\delta(s(x))xs(x)+s(x)\delta(x)s(x)+s(x)x\delta(s(x))\\
=&\delta(s(x))x+s(x)\delta(x)s(x)+x\delta(s(x)).
\end{align*}
By \eqref{404}, \eqref{405} and \eqref{406} we can obtain that
\begin{align*}
s(\delta(x))=&s(\delta(s(x))x+s(x)\delta(x)s(x)+x\delta(s(x)))\\
=&r(\delta(s(x)x)\vee l(\delta(s(x)x)\vee r(x\delta(s(x)))\vee l(x\delta(s(x)))\\
&\vee r(s(x)\delta(x)s(x))\vee l(s(x)\delta(x)s(x)).
\end{align*}
It implies that
$$\Delta(s(\delta(x)))\leq6\Delta(s(x)).$$
for every $x$ in $\mathcal M$.
\end{proof}

The following result is proved by Ber, Chilin and Sukochev in \cite{Ber3}.

\begin{lemma}\cite[Lemma 4.4]{Ber3}
Suppose that $\{x_n\}_{n=1}^\infty$ is a sequence in $LS(\mathcal M)$. If $s(x_n)\in\mathcal P_{fin}(\mathcal M)$
and $\Delta(s(x_n))\xrightarrow[n\rightarrow\infty]{t(L^\infty(\Omega))}0$, then $x_n\xrightarrow[n\rightarrow\infty]{t(\mathcal M)}0$.
In particular, if $\{p_n\}_{n=1}^{\infty}\subset\mathcal P_{fin}(\mathcal M)$ and $p_n\downarrow0$,
then $p_n\xrightarrow[n\rightarrow\infty]{t(\mathcal M)}0$.
\end{lemma}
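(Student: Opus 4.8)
The plan is to verify directly that $x_n$ eventually belongs to every basic $t(\mathcal M)$-neighbourhood $V(B,\varepsilon,\gamma)$ of $0$. Fix $B\in\Sigma$ with $\mu(B)<\infty$ and $\varepsilon,\gamma>0$; I must exhibit, for all large $n$, a projection $p_n\in\mathcal P(\mathcal M)$ and a central projection $z_n\in\mathcal P(\mathcal Z(\mathcal M))$ with $x_np_n\in\mathcal M$, $\|x_np_n\|_{\mathcal M}\le\varepsilon$, $\varphi(z_n^\bot)\in W(B,\varepsilon,\gamma)$, and $\Delta(z_np_n^\bot)\le\varepsilon\varphi(z_n)$.

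For the norm condition I would take $p_n=E_\varepsilon(|x_n|)$, the spectral projection of the self-adjoint operator $|x_n|$ affiliated with $\mathcal M$ corresponding to $[0,\varepsilon]$; it lies in $\mathcal P(\mathcal M)$, and $|x_n|p_n\le\varepsilon p_n$ gives $x_np_n\in\mathcal M$ with $\|x_np_n\|_{\mathcal M}\le\varepsilon$. Its complement is dominated by the support projection of $|x_n|$, so $p_n^\bot=E_\varepsilon^\bot(|x_n|)\le r(x_n)\le s(x_n)$, and by monotonicity of $\Delta$ (immediate from $\mathbb{D}_2$) one gets $\Delta(p_n^\bot)\le\Delta(s(x_n))$; both are elements of $L^0_+(\Omega,\Sigma,\mu)$ by $\mathbb{D}_1$ since $s(x_n)\in\mathcal P_{fin}(\mathcal M)$.

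Next I would make the central cut-off by setting $\Omega_n=\{\omega\in\Omega:\Delta(s(x_n))(\omega)\le\varepsilon\}$ and $z_n=\varphi^{-1}(\chi_{\Omega_n})\in\mathcal P(\mathcal Z(\mathcal M))$. By $\mathbb{D}_4$, $\Delta(z_np_n^\bot)=\varphi(z_n)\Delta(p_n^\bot)\le\varphi(z_n)\Delta(s(x_n))\le\varepsilon\varphi(z_n)$, the last inequality because $\varphi(z_n)=\chi_{\Omega_n}$ and $\Delta(s(x_n))\le\varepsilon$ on $\Omega_n$. For the $W$-condition I take $E=B\cap\Omega_n\subset B$; since $E\subset\Omega_n$ we have $\varphi(z_n^\bot)\chi_E=\chi_{\Omega\setminus\Omega_n}\chi_E=0$, so $\|\varphi(z_n^\bot)\chi_E\|_{L^\infty}=0\le\varepsilon$, while $\mu(B\setminus E)=\mu(\{\omega\in B:\Delta(s(x_n))(\omega)>\varepsilon\})$. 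The hypothesis $\Delta(s(x_n))\xrightarrow{t(L^\infty(\Omega))}0$ means precisely that $\Delta(s(x_n))\chi_B\to0$ in measure $\mu$, so this last quantity tends to $0$; hence it is $\le\gamma$ for all large $n$, giving $\varphi(z_n^\bot)\in W(B,\varepsilon,\gamma)$. Thus $x_n\in V(B,\varepsilon,\gamma)$ eventually, i.e. $x_n\xrightarrow{t(\mathcal M)}0$.

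For the final assertion, when $\{p_n\}\subset\mathcal P_{fin}(\mathcal M)$ and $p_n\downarrow0$ we have $s(p_n)=l(p_n)\vee r(p_n)=p_n$, and by $\mathbb{D}_7$ the functions $\Delta(p_n)\in L^0_+(\Omega,\Sigma,\mu)$ converge to $0$ almost everywhere, hence in measure on every set of finite measure; so $\Delta(s(p_n))\xrightarrow{t(L^\infty(\Omega))}0$ and the first part applies. I do not expect a genuine obstacle here; the only care needed is bookkeeping, since the single scalar $\varepsilon$ must simultaneously govern the operator-norm bound, the estimate on $\Delta(z_np_n^\bot)$, and the $L^\infty$-bound inside $W$ — which is exactly the shape of the definition of $V(B,\varepsilon,\gamma)$, so choosing the spectral level and the central projection both against the same $\varepsilon$ makes everything fit.
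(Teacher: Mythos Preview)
The paper does not give its own proof of this lemma; it is quoted as \cite[Lemma~4.4]{Ber3} and used as a black box. Your argument is correct: choosing the spectral cut $p_n=E_\varepsilon(|x_n|)$ handles the norm condition, the inequality $p_n^\bot\le r(x_n)\le s(x_n)$ together with $\mathbb D_2$ controls $\Delta(p_n^\bot)$, and the central projection $z_n=\varphi^{-1}(\chi_{\{\Delta(s(x_n))\le\varepsilon\}})$ simultaneously forces $\Delta(z_np_n^\bot)\le\varepsilon\varphi(z_n)$ via $\mathbb D_4$ and pushes $\varphi(z_n^\bot)$ into $W(B,\varepsilon,\gamma)$ once convergence in measure on $B$ makes $\mu\{\omega\in B:\Delta(s(x_n))(\omega)>\varepsilon\}\le\gamma$. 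The deduction of the special case from $\mathbb D_7$ (a.e.\ convergence $\Rightarrow$ convergence in measure on sets of finite measure) is also fine. This is exactly the direct verification one would expect, and it matches the standard proof in the cited source.
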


By Lemmas 4.3 and 4.4, we have the following result.

\begin{lemma}
Suppose that $\mathcal M$ is a von Neumann algebra.
Let $x$ be in $S(\mathcal M)$, $\{p_n\}_{n=1}^{\infty},\{q_n\}_{n=1}^{\infty}$
be two sequences of non-zero projections
in $\mathcal M$ such that $p_n\uparrow1,q_n\uparrow1$,
$xp_n,xq_n\in\mathcal M$ and $p_n^\bot,q_n^\bot\in\mathcal P_{fin}(\mathcal M)$
for every $n$ in $\mathbb{N}$.
If $\delta$ is a Jordan derivation from $\mathcal M$ into $LS(\mathcal M)$,
then there exists an element $\hat{\delta}(x)$
in $LS(\mathcal M)$ such that
$$t(\mathcal M)-{\lim_{n \to \infty} \delta(xp_n)}=\hat{\delta}(x)=t(\mathcal M)-{\lim_{n\to\infty}\delta(xq_n)}.$$
\end{lemma}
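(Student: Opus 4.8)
The plan is to prove the two assertions separately: first that $\{\delta(xp_n)\}$ is a $t(\mathcal M)$-Cauchy sequence, so that by completeness of $(LS(\mathcal M),t(\mathcal M))$ it converges to some $\hat\delta(x)\in LS(\mathcal M)$; and then that this limit is unchanged when $\{p_n\}$ is replaced by $\{q_n\}$. Both parts run on the same mechanism: each difference of two relevant terms is $\delta$ applied to an element of $\mathcal M$ whose support has small $\Delta$-value, so Lemma 4.3 controls the support of the $\delta$-image, and Lemma 4.4 then forces that difference to $0$ in $t(\mathcal M)$.

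For the Cauchy property, fix $m>n$. Since $p_n\uparrow 1$ we have $p_n\le p_m$, so $p_m-p_n$ is a projection with $p_m-p_n\le p_n^\bot\in\mathcal P_{fin}(\mathcal M)$. As $xp_m,xp_n\in\mathcal M$, the element $a:=x(p_m-p_n)=xp_m-xp_n$ lies in $\mathcal M$, its right support satisfies $r(a)\le p_m-p_n\le p_n^\bot$, and since $l(a)\sim r(a)$, conditions $\mathbb{D}_{3}$, $\mathbb{D}_{6}$ and the monotonicity of $\Delta$ give $\Delta(s(a))\le 2\Delta(p_n^\bot)$. By Lemma 4.3, $\Delta(s(\delta(a)))\le 12\Delta(p_n^\bot)$, so $s(\delta(a))\in\mathcal P_{fin}(\mathcal M)$ by $\mathbb{D}_{1}$. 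Because $p_n^\bot\downarrow 0$, condition $\mathbb{D}_{7}$ gives $\Delta(p_n^\bot)\to 0$ almost everywhere, hence $\Delta(p_n^\bot)\xrightarrow{t(L^\infty(\Omega))}0$. If $\{\delta(xp_n)\}$ were not $t(\mathcal M)$-Cauchy, I would pick indices $n_k<m_k$ with $n_k\to\infty$ for which $w_k:=\delta(xp_{m_k})-\delta(xp_{n_k})=\delta\bigl(x(p_{m_k}-p_{n_k})\bigr)$ stays outside a fixed neighborhood of $0$; but the estimate above gives $s(w_k)\in\mathcal P_{fin}(\mathcal M)$ and $\Delta(s(w_k))\le 12\Delta(p_{n_k}^\bot)\xrightarrow{t(L^\infty(\Omega))}0$, so $w_k\to 0$ in $t(\mathcal M)$ by Lemma 4.4 -- a contradiction. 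Hence $\{\delta(xp_n)\}$ is Cauchy and $\hat\delta(x):=t(\mathcal M)-\lim_{n\to\infty}\delta(xp_n)$ exists in $LS(\mathcal M)$.

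For independence of the sequence, I would estimate $\delta(xp_n)-\delta(xq_n)=\delta\bigl(x(p_n-q_n)\bigr)$, with $x(p_n-q_n)\in\mathcal M$. Its right support is dominated by $(p_n\vee q_n)-(p_n\wedge q_n)$, which is the orthogonal sum $\bigl[(p_n\vee q_n)-p_n\bigr]+\bigl[p_n-(p_n\wedge q_n)\bigr]$; the first summand is $\le p_n^\bot$, and by Kaplansky's parallelogram law $p_n-(p_n\wedge q_n)\sim (p_n\vee q_n)-q_n\le q_n^\bot$, so $\mathbb{D}_{2}$, $\mathbb{D}_{3}$, $\mathbb{D}_{6}$ and $l\sim r$ yield $\Delta(s(x(p_n-q_n)))\le 2\bigl(\Delta(p_n^\bot)+\Delta(q_n^\bot)\bigr)$, and hence $\Delta(s(\delta(x(p_n-q_n))))\le 12\bigl(\Delta(p_n^\bot)+\Delta(q_n^\bot)\bigr)$ by Lemma 4.3. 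Since $p_n^\bot\downarrow 0$ and $q_n^\bot\downarrow 0$, this bound tends to $0$ in $t(L^\infty(\Omega))$ by $\mathbb{D}_{7}$, so Lemma 4.4 gives $\delta(xp_n)-\delta(xq_n)\to 0$ in $t(\mathcal M)$, and therefore $t(\mathcal M)-\lim_{n\to\infty}\delta(xq_n)=\hat\delta(x)$ as well.

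The conceptual content sits in Lemmas 4.3 and 4.4, which I take as given; what remains is projection-lattice bookkeeping, and the step I expect to be most delicate is the support estimate for $x(p_n-q_n)$ via the Kaplansky identity for two non-comparable sequences, together with making the passage ``all one-sided differences vanish $\Rightarrow$ the sequence is Cauchy'' fully rigorous using completeness of $(LS(\mathcal M),t(\mathcal M))$.
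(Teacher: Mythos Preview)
Your proposal is correct and follows essentially the same route as the paper: both parts reduce to bounding $\Delta\bigl(s(\delta(\,\cdot\,))\bigr)$ via Lemma~4.3 and then invoking Lemma~4.4, with the only work being the projection-lattice estimates for $s(x(p_m-p_n))$ and $s(x(p_n-q_n))$. Your decomposition $(p_n\vee q_n)-(p_n\wedge q_n)=\bigl[(p_n\vee q_n)-p_n\bigr]+\bigl[p_n-(p_n\wedge q_n)\bigr]$ together with Kaplansky's identity is exactly equivalent to the paper's computation (indeed $(p_n\vee q_n)-(p_n\wedge q_n)=(p_n-p_n\wedge q_n)\vee(q_n-p_n\wedge q_n)$), and even yields a slightly sharper constant; the contradiction framing for the Cauchy step is unnecessary but harmless, since the bound $\Delta(s(\delta(xp_m)-\delta(xp_n)))\le 12\,\Delta(p_n^\bot)$ already gives the Cauchy property directly.
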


\begin{proof}
Let $m$ and $n$ be in $\mathbb{N}$ with $m>n$, we have that
$$l(x(p_m-p_n))\sim r(x(p_m-p_n))\leq p_m-p_n.$$
By Lemma 4.3, Conditions $\mathbb{D}_2$ and $\mathbb{D}_3$, we can obtain that
\begin{align}
\Delta(s(\delta(xp_m-xp_n)))&=\Delta(s(\delta(x(p_m-xp_n))))\leq6\Delta(s(x(p_m-p_n)))\notag\\
&\leq6\Delta(l(x(p_m-p_n))\vee(p_m-p_n))\leq12\Delta(p_m-p_n)\notag\\
&\leq12\Delta(p_n^\bot).                                              \label{407}
\end{align}
By Condition $\mathbb{D}_1$, we know that $\Delta(p_n^\bot)\in L^0_+(\Omega,\Sigma,\mu)$,
and by Condition $\mathbb{D}_7$, we know that $\Delta(p_n^\bot)\downarrow0$,
it follows that $\Delta(p_n^\bot)\xrightarrow[n\rightarrow\infty]{t(L^\infty(\Omega))}0$.
By \eqref{407} we can obtain that
$$\Delta(s(\delta(xp_m)-\delta(xp_n)))\xrightarrow[m,n\rightarrow\infty]{t(L^\infty(\Omega))}0.$$
By Lemma 4.4, we have that
$$(\delta(xp_m)-\delta(xp_n))\xrightarrow[m,n\rightarrow\infty]{t(L^\infty(\Omega))}0.$$
It means that $\{\delta(xp_n)\}_{n=1}^\infty$ is a Cauchy sequence in $(LS(\mathcal M),t(\mathcal M))$.
Hence, there exists an element $\hat{\delta}(x)$ in $LS(\mathcal M)$ such that
$$t(\mathcal M)-{\lim_{n \to \infty} \delta(xp_n)}=\hat{\delta}(x).$$

Next we show that
$$t(\mathcal M)-{\lim_{n \to \infty} \delta(xq_n)}=\hat{\delta}(x).$$

For every $n\in\mathbb{N}$, we have that
\begin{align*}
&(p_n-q_n)((p_n-p_n\wedge q_n)\vee(q_n-p_n\wedge q_n))\\
=&((p_n-p_n\wedge q_n)-(q_n-p_n\wedge q_n))((p_n-p_n\wedge q_n)\vee (q_n-p_n\wedge q_n))\\
=&(p_n-p_n\wedge q_n)-(q_n-p_n\wedge q_n)=p_n-q_n.
\end{align*}
It follows that
$$r(p_n-q_n)=r((p_n-q_n)((p_n-p_n\wedge q_n)\vee(q_n-p_n\wedge q_n)))\leq((p_n-p_n\wedge q_n)\vee(q_n-p_n\wedge q_n)).$$
Since
$$r(x(p_n-q_n))\leq r(p_n-q_n)$$
and
$$l(x(p_n-q_n))\sim r(x(p_n-q_n)).$$
By Condition $\mathbb{D}_6$ we can obtain that
\begin{align}
\Delta(s(x(p_n-q_n)))=&\Delta(l(x(p_n-q_n))\vee r(x(p_n-q_n)))\notag\\
\leq&\Delta(l(x(p_n-q_n)))+\Delta(r(x(p_n-q_n)))\notag\\
=&2\Delta(r(x(p_n-q_n)))\notag\\
\leq&2\Delta((p_n-p_n\wedge q_n)\vee(q_n-p_n\wedge q_n))\notag\\
\leq&2\Delta(p_n-p_n\wedge q_n)+2\Delta(q_n-p_n\wedge q_n)\notag\\
\leq&4\Delta(1-p_n\wedge q_n)\notag\\
=&4\Delta(p_n^\bot\vee q_n^\bot)
\leq4(\Delta(p_n^\bot)+\Delta(q_n^\bot)).                              \label{508}
\end{align}
By Lemma 4.3, we have that
\begin{align}
\Delta(s(\delta(xp_n)-\delta(xq_n)))=\Delta(s(\delta(x(p_n-q_n))))\leq6\Delta(s(x(p_n-q_n))),                             \label{509}
\end{align}
By \eqref{508} and \eqref{509}, we can obtain that
$$\Delta(s(\delta(xp_n)-\delta(xq_n)))\leq24(\Delta(p_n^\bot)+D(q_n^\bot))\downarrow0$$
By Lemma 4.4, we obtain that
$$t(\mathcal M)-{\lim_{n \to \infty} \delta(xq_n)}=\hat{\delta}(x)=t(\mathcal M)-{\lim_{n \to \infty} \delta(xp_n)}$$
for every $x$ in $S(\mathcal M)$.
\end{proof}

By Lemma 4.5, we can extend every Jordan derivation $\delta$ from $\mathcal M$
into $LS(\mathcal M)$ up to a Jordan derivation $\hat{\delta}$ from $S(\mathcal M)$
into $LS(\mathcal M)$.

For every $x$ in $S(\mathcal M)$, there exists a sequence $\{p_n\}_{n=1}^{\infty}$
of non-zero projections in $\mathcal M$ such that $p_n\uparrow1$,
$xp_n\in\mathcal M$ and $p_n^\bot\in\mathcal P_{fin}(\mathcal M)$
for every $n$ in $\mathbb{N}$. By Lemma 4.5, there exists
an element $\hat{\delta}(x)$ in $LS(\mathcal M)$ such that
$$\hat{\delta}(x)=t(\mathcal M)-{\lim_{n \to \infty} \delta(xp_n)}.$$
Moreover, we know that the definition of $\hat{\delta}(x)$ dose not
depend on a choice of sequence $\{p_n\}_{n=1}^{\infty}$, which satisfies
the above properties, in particular, $\hat{\delta}(x)=\delta(x)$
for every $x$ in $\mathcal M$.

\begin{theorem}
Suppose that $\mathcal M$ is a von Neumann algebra and $\delta$ is
a Jordan derivation from $\mathcal M$ into $LS(\mathcal M)$.
Then $\hat{\delta}$ is a unique Jordan derivation
from $S(\mathcal M)$ into $LS(\mathcal M)$ such that $\hat{\delta}(x)=\delta(x)$
for every $x\in\mathcal M$.
\end{theorem}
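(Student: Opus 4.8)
\textit{Proof proposal.} The plan is to follow the pattern of the proof of Theorem 4.2: we already know, from the discussion preceding the statement, that $\hat{\delta}$ is well defined (Lemma 4.5) and that $\hat{\delta}(x)=\delta(x)$ for $x\in\mathcal M$, so it remains to verify that $\hat{\delta}$ is linear, that it is a Jordan derivation, and that it is unique. Linearity is routine: given $x,y\in S(\mathcal M)$ with approximating sequences $\{p_n\}$ and $\{q_n\}$ (projections with $p_n\uparrow1$, $p_n^\bot\in\mathcal P_{fin}(\mathcal M)$, $xp_n\in\mathcal M$, and similarly for $y$), I would pass to the refinement $r_n=p_n\wedge q_n$, which again increases to $1$ — its complement $p_n^\bot\vee q_n^\bot$ is a decreasing join of two finite projections with $\Delta(r_n^\bot)\le\Delta(p_n^\bot)+\Delta(q_n^\bot)\to0$, hence has limit $0$ by faithfulness of $\Delta$ — and is simultaneously an approximating sequence for $x$, $y$ and $x+y$; since $\delta$ is additive, $\delta((x+y)r_n)=\delta(xr_n)+\delta(yr_n)$, and letting $n\to\infty$ in $t(\mathcal M)$ (Lemma 4.5) gives $\hat{\delta}(x+y)=\hat{\delta}(x)+\hat{\delta}(y)$; homogeneity is identical.

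The substantive point is the Jordan identity, and the main idea is to establish it first for \emph{self-adjoint} $x$, where commuting spectral projections are available, and then deduce the general case by a purely algebraic linearization. For self-adjoint $x\in S(\mathcal M)$ put $p_n=E_n(|x|)$; since $x$ is measurable there is $\lambda_0$ with $E_{\lambda_0}^\bot(|x|)\in\mathcal P_{fin}(\mathcal M)$, so for $n\geq\lambda_0$ we have $p_n\uparrow1$, $p_n^\bot\in\mathcal P_{fin}(\mathcal M)$, $xp_n\in\mathcal M$, and crucially $xp_n=p_nx$. Hence $(xp_n)^2=x^2p_n$, so $\{p_n\}$ is also an approximating sequence for $x^2\in S(\mathcal M)$, and applying the Jordan identity for $\delta$ on $\mathcal M$ to $xp_n\in\mathcal M$ yields
$$\delta(x^2p_n)=\delta\bigl((xp_n)^2\bigr)=\delta(xp_n)\,(xp_n)+(xp_n)\,\delta(xp_n).$$
By Lemma 4.5 the left-hand side converges to $\hat{\delta}(x^2)$ and $\delta(xp_n)\to\hat{\delta}(x)$, while $xp_n\to x$; since $(LS(\mathcal M),t(\mathcal M))$ is a topological $*$-algebra, multiplication is jointly continuous, and passing to the limit gives $\hat{\delta}(x^2)=\hat{\delta}(x)x+x\hat{\delta}(x)$. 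Next, for self-adjoint $a,b\in S(\mathcal M)$ I would apply this identity to the self-adjoint element $a+b$ and subtract the identities for $a$ and for $b$, using linearity of $\hat{\delta}$ and of multiplication, to obtain $\hat{\delta}(ab+ba)=\hat{\delta}(a)b+a\hat{\delta}(b)+\hat{\delta}(b)a+b\hat{\delta}(a)$. Finally, for arbitrary $x\in S(\mathcal M)$ write $x=a+ib$ with $a=\mathrm{Re}(x)$, $b=\mathrm{Im}(x)$ self-adjoint in $S(\mathcal M)$, expand $x^2=a^2-b^2+i(ab+ba)$, apply $\hat{\delta}$ together with the two identities just obtained, and check that the outcome equals $\hat{\delta}(x)x+x\hat{\delta}(x)$; this last verification is a short formal computation.

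For uniqueness, let $\delta_1\colon S(\mathcal M)\to LS(\mathcal M)$ be another Jordan derivation with $\delta_1|_{\mathcal M}=\delta$ and set $D=\hat{\delta}-\delta_1$, a Jordan derivation on $S(\mathcal M)$ vanishing on $\mathcal M$. For $x\in S(\mathcal M)$ fix an approximating sequence $\{p_n\}$; by the identity $D(aba)=D(a)ba+aD(b)a+abD(a)$ (a standard consequence of the linearized Jordan identity of Lemma 3.1, valid since $LS(\mathcal M)$ is $2$-torsion-free) with $a=p_n\in\mathcal M$ and $b=x$, we get $D(p_nxp_n)=p_nD(x)p_n$ because $D(p_n)=0$; but $p_nxp_n\in\mathcal M$ forces $D(p_nxp_n)=0$, so $p_nD(x)p_n=0$ for all $n$. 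Since $p_n^\bot\downarrow0$ with $p_n^\bot\in\mathcal P_{fin}(\mathcal M)$, Lemma 4.4 gives $p_n\to1$ in $t(\mathcal M)$, and joint continuity of multiplication then forces $D(x)=0$, hence $\hat{\delta}=\delta_1$.

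I expect the main obstacle to be exactly what the spectral-projection device circumvents: for a general element the naive compression $p_nxp_n$ does \emph{not} satisfy $(p_nxp_n)^2=x^2p_n$ because $p_n$ need not commute with $x$, so a direct attack would require showing that the non-commutativity error terms (of the shape $r_n^\bot x^2r_n$ and $r_nxr_n^\bot xr_n$) are $t(\mathcal M)$-negligible via the finite-support estimates of Lemmas 4.3 and 4.4. Restricting first to self-adjoint elements removes this difficulty entirely, at the cost only of the elementary linearization step, which is legitimate precisely because $\hat{\delta}$ has already been shown to be additive and $LS(\mathcal M)$, being a complex algebra, is $2$-torsion-free.
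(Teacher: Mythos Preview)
Your proposal is correct, and both the Jordan-identity step and the uniqueness step take genuinely different routes from the paper. For the Jordan identity, the paper attacks general $x\in S(\mathcal M)$ directly: it builds an auxiliary sequence $s_n=g_n\wedge E_n(|x|)$ with $g_n=1-r(E_n^\bot(|x|)x_n)$, arranged so that $x^2s_n=(xE_n(|x|))^2s_n$ even though $E_n(|x|)$ need not commute with $x$, and then unpacks $\delta(s_n x^2 s_n)$ using the Jordan rule; the error terms are shown to vanish via the finite-support machinery of Lemmas~4.3--4.4. Your detour through self-adjoint elements is more elementary: for $x=x^*$ the spectral projections $E_n(|x|)$ already commute with $x$, so $(xp_n)^2=x^2p_n$ and no correction terms appear; the general case then follows from the real/imaginary decomposition and the polarized identity, a purely algebraic step once additivity and complex homogeneity of $\hat{\delta}$ are in hand. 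The paper's approach is more uniform (one argument for all $x$) but technically heavier; yours is cleaner at the cost of a two-stage reduction.

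For uniqueness, the paper again works directly, expanding $E_n(|y|)\delta_1(y)E_n(|y|)$ by the Jordan rule and showing the residual terms tend to zero via Lemma~4.3 applied to $\delta(E_n(|y|))=-\delta(E_n^\bot(|y|))$. Your use of $D=\hat{\delta}-\delta_1$ together with the triple-product identity $D(p_nxp_n)=p_nD(x)p_n$ (since $D(p_n)=0$) is shorter and arguably more transparent; note only that $p_nxp_n=p_n(xp_n)\in\mathcal M$ because $xp_n\in\mathcal M$, which you use implicitly. Both arguments ultimately rest on $p_n\to 1$ in $t(\mathcal M)$ from Lemma~4.4 and the joint continuity of multiplication in $(LS(\mathcal M),t(\mathcal M))$.
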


\begin{proof}
First, we show that $\hat{\delta}$ is a linear mapping from $S(\mathcal M)$ into $LS(\mathcal M)$.
For each $x$ and $y$ in $S(\mathcal M)$, we can select two sequences
$\{p_n\}_{n=1}^{\infty},\{q_n\}_{n=1}^{\infty}\subset\mathcal P(\mathcal M)$,
such that
$$p_n\uparrow1,q_n\uparrow1,xp_n,yq_n\in\mathcal M~~\mathrm{and}~~p_n^\bot,q_n^\bot\in\mathcal P_{fin}(\mathcal M)$$
for every $n\in\mathbb{N}$. Denote by
$$e_n=p_n\wedge q_n.$$
It is easy to show that $\{e_n\}_{n=1}^{\infty}$ is an increase sequence, and we have that
$$xe_n=xp_ne_n\in\mathcal M, ye_n=yq_ne_n\in\mathcal M,$$
and
$$e_n^\bot=p_n^\bot\vee q_n^\bot\in\mathcal P_{fin}(\mathcal M),
\Delta(e_n^\bot)\leq(\Delta(p_n^\bot)+\Delta(q_n^\bot))\downarrow0.$$
By Condition $\mathbb{D}_7$, we can obtain that
$$e_n^\bot\downarrow0~~\mathrm{and}~~e_n\uparrow1.$$
By the definition of $\hat{\delta}$, we have that
\begin{align*}
\hat{\delta}(x+y)=&t(\mathcal M)-{\lim_{n \to \infty}\delta((x+y)e_n)}\\
=&(t(\mathcal M)-{\lim_{n \to \infty}\delta(xe_n)})+(t(\mathcal M)-{\lim_{n \to \infty}\delta(ye_n)})\\
=&\hat{\delta}(x)+\hat{\delta}(y).
\end{align*}
Similarly, we can show that $\hat{\delta}(\lambda x)=\lambda\hat{\delta}(x)$ for every $\lambda\in\mathbb{C}$.

Next we show that $\hat{\delta}$ is a Jordan derivation from $S(\mathcal M)$ into $LS(\mathcal M)$.
By the polar decomposition $x=u|x|$, $u^*u=r(x)$, we have that $x_n=xE_n(|x|)\in\mathcal M$
for every $n\in\mathbb{N}$. Denote by
$$g_n=1-r(E_n^\bot(|x|)x_n)~~\mathrm{and}~~s_n=g_n\wedge E_n(|x|).$$
It implies that
\begin{align}
x_ng_n=E_n(|x|)x_ng_n+E_n^\bot(|x|)x_ng_n=E_n(|x|)x_ng_n.                               \label{408}
\end{align}
By \eqref{408}, we have that
\begin{align}
x^2s_n&=x^2E_{n}(|x|)s_n=xx_ns_n=xx_ng_ns_n\notag\\
&=xE_{n}(|x|)x_ng_ns_n=xE_{n}(|x|)xE_{n}(|x|)s_n.                              \label{501}
\end{align}

Since $g_n^\bot=r(E_n^\bot(|x|)x_n)\sim l(E_n^\bot(|x|)x_n)\leq E_n^\bot(|x|)$,
we obtain that $g_n^\bot\preceq E_n^\bot(|x|)$.
Since $x\in S(\mathcal M)$, there exists $n_0\in\mathbb{N}$ such that $E_n^\bot(|x|)\in\mathcal P_{fin}(\mathcal M)$
for every $n\geq n_0$, it follows that $g_n^\bot\in\mathcal P_{fin}(\mathcal M)$ for every $n\geq n_0$.
Hence we have that
$$s_n^\bot=g_n^\bot\vee E_{n}^\bot(|y|)\in\mathcal P_{fin}(\mathcal M)$$
for every $n>n_0$, and
$$D(s_n^\bot)\leq D(g_n^\bot)+D(E_{n}^\bot(|y|))\leq(D(E_{n}^\bot(|x|))+D(E_{n}^\bot(|y|)))\downarrow0.$$
It means that $s_n^\bot\downarrow0$ and $s_n\uparrow1$.
By \eqref{408}, we have that
\begin{align*}
E_{n+1}^\bot(|x|)x_{n+1}s_n=&E_{n+1}^\bot(|x|)E_{n}^\bot(|x|x_{n+1}E_{n}(|x|))s_n\\
=&E_{n+1}^\bot(|x|)(E_{n}^\bot(|x|)x_{n}E_{n}(|x|))s_n\\
=&E_{n+1}^\bot(|x|)(E_{n}^\bot(|x|)x_{n}s_n)\\
=&E_{n+1}^\bot(|x|)(E_{n}^\bot(|x|)x_{n}g_n)s_n\\
=&0.
\end{align*}
It follows that
$$s_n\leq1-r(E_{n+1}^\bot(|x|)x_{n+1})=g_{n+1}$$
for every $n\in\mathbb{N}$.
By the inequalities $s_n\leq E_{n}(|y|)\leq E_{n+1}(|y|)$,
we have that $s_n\leq s_{n+1}$.

By \eqref{501}, Lemmas 4.4 and 4.5, we can obtain that
\begin{align*}
&\hat{\delta}(x^2)=t(\mathcal M)-{\lim_{n \to \infty}\delta(x^2s_n)}=t(\mathcal M)-{\lim_{n \to \infty}}s_n\delta(x^2s_n)s_n\\
=&t(\mathcal M)-{\lim_{n \to \infty}}[\delta(s_nx^2s_n)-\delta(s_n)x^2s_n-s_nx^2s_n\delta(s_n)]\\
=&t(\mathcal M)-{\lim_{n \to \infty}}[\delta(s_nxE_n(|x|)xE_n(|x|)s_n)-\delta(s_n)x^2s_n-s_nx^2s_n\delta(s_n)]\\
=&t(\mathcal M)-{\lim_{n \to \infty}}[s_n\delta(xE_n(|x|)xE_n(|x|))s_n+\delta(s_n)xE_n(|x|)xE_n(|x|)s_n\\
&+s_nxE_n(|x|)xE_n(|x|)\delta(s_n)-\delta(s_n)x^2s_n-s_nx^2s_n\delta(s_n)]\\
=&t(\mathcal M)-{\lim_{n \to \infty}}[s_n(\delta(xE_n(|x|))xE_n(|x|)+xE_n(|x|)\delta(xE_n(|x|))s_n]\\
=&t(\mathcal M)-{\lim_{n \to \infty}[s_n(\hat{\delta}(x)x+x\hat{\delta}(x))s_n]}\\
=&\hat{\delta}(x)x+x\hat{\delta}(x).
\end{align*}
It means that $\hat{\delta}$ is a Jordan derivation from $S(\mathcal M)$
into $LS(\mathcal M)$. Moreover, we have that
$\hat{\delta}(x)=\delta(x)$ for every $x\in\mathcal M$.

Finally, we show the uniqueness of $\hat{\delta}$.
Suppose that $\delta_1$ is a Jordan derivation from $S(\mathcal M)$
into $LS(\mathcal M)$ such that $\delta_1(y)=\delta(y)$
for every $y\in\mathcal M$.

Let $y$ be in $\mathcal M$, then $E_{n}(|y|)\uparrow1$, $yE_{n}(|y|)\in\mathcal M$,
for every $n\in\mathbb{N}$. There exists $n_1\in\mathbb{N}$ such that $E_{n}^\bot(|y|)\in\mathcal P_{fin}(\mathcal M)$
for every $n\geq n_1$. By Lemma 4.4, we have that
$E_{n}(|y|)\xrightarrow[n\rightarrow\infty]{t(\mathcal M)}1$. Since $(LS(\mathcal M),t(\mathcal M))$
is a topological algebra, it follows that
\begin{align}
\delta_1(y)=&t(\mathcal M)-{\lim_{n \to \infty}}E_{n}(|y|)\delta_1(y)E_{n}(|y|)\notag\\
=&t(\mathcal M)-{\lim_{n \to \infty}}[\delta_1(E_{n}(|y|)yE_{n}(|y|))-\delta_1(E_{n}(|y|))yE_{n}(|y|)\notag\\
&-E_{n}(|y|)y\delta_1(E_{n}(|y|))]\notag\\
=&t(\mathcal M)-{\lim_{n \to \infty}}[\delta_1(E_{n}(|y|)yE_{n}(|y|)E_{n}(|y|))-\delta_1(E_{n}(|y|))yE_{n}(|y|)\notag\\
&-E_{n}(|y|)y\delta_1(E_{n}(|y|))]\notag\\
=&t(\mathcal M)-{\lim_{n \to \infty}}[E_{n}(|y|)\delta_1(yE_{n}(|y|))E_{n}(|y|)+\delta_1(E_{n}(|y|))yE_{n}(|y|)\notag\\
&+E_{n}(|y|)yE_{n}(|y|)\delta_1(E_{n}(|y|))-\delta_1(E_{n}(|y|))yE_{n}(|y|)-E_{n}(|y|)y\delta_1(E_{n}(|y|))]\notag\\
=&\hat{\delta}(y)+t(\mathcal M)-{\lim_{n \to \infty}}[E_{n}(|y|)yE_{n}(|y|)\delta_1(E_{n}(|y|))\notag\\
&-E_{n}(|y|)y\delta_1(E_{n}(|y|))].                                                                              \label{412}
\end{align}
Since $\delta(1)=0$, $s(y)=s(-y)$ for every $y\in LS(\mathcal M)$, by Lemma 4.3, it follows that
\begin{align*}
\Delta(s((\delta(E_n(|y|))))=\Delta(s((\delta(-E_n(|y|))))=\Delta(s((\delta(1-E_n(|y|))))\leq6\Delta(E_n^\bot(|y|))\downarrow0.
\end{align*}
By Lemma 4.4, we obtain $\delta(E_n(|y|))\xrightarrow[n\rightarrow\infty]{t(\mathcal M)}0$,
by \eqref{412}, we have that $\delta_1(y)=\hat{\delta}(y)$.
\end{proof}

\begin{theorem}
Suppose that $\mathcal{M}$ is a von Neumann algebra
and $\mathcal A$ is a subalgebra of $LS(\mathcal M)$ such that
$\mathcal M\subset\mathcal A$.
If $\delta$ is a Jordan derivation from $\mathcal A$ into $LS(\mathcal M)$,
then there exists a unique Jordan derivation $\delta_{\mathcal A}$ from $LS(\mathcal M)$
into $LS(\mathcal M)$ such that $\delta_{\mathcal A}(x)=\delta(x)$ for every $x\in\mathcal A$.
\end{theorem}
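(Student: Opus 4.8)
The plan is to construct $\delta_{\mathcal A}$ by applying Theorems 4.3 and 4.2 in succession to the restriction $\delta|_{\mathcal M}$, and then to verify that the extension so obtained agrees with $\delta$ on all of $\mathcal A$, not merely on $\mathcal M$; the latter verification is the only genuinely non-formal part.

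First I would put $\delta_0=\delta|_{\mathcal M}$. Since $\mathcal M$ is a subalgebra of $\mathcal A$, this is a Jordan derivation from $\mathcal M$ into $LS(\mathcal M)$, so by Theorem 4.3 it extends uniquely to a Jordan derivation $\hat\delta_0\colon S(\mathcal M)\to LS(\mathcal M)$, and by Theorem 4.2 the latter extends uniquely to a Jordan derivation $\delta_{\mathcal A}:=\widetilde{\hat\delta_0}\colon LS(\mathcal M)\to LS(\mathcal M)$. By construction $\delta_{\mathcal A}|_{S(\mathcal M)}=\hat\delta_0$ and $\delta_{\mathcal A}|_{\mathcal M}=\delta_0=\delta|_{\mathcal M}$.

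Next I would show $\delta_{\mathcal A}(x)=\delta(x)$ for every $x\in\mathcal A$. Consider $D=\delta_{\mathcal A}|_{\mathcal A}-\delta$: it is a Jordan derivation from $\mathcal A$ into $LS(\mathcal M)$ with $D|_{\mathcal M}=0$, and the goal is $D\equiv 0$. I would first handle $y\in\mathcal A\cap S(\mathcal M)$: by the definition of $S(\mathcal M)$ there are projections $r_n\in\mathcal P(\mathcal M)$ with $r_n\uparrow 1$, $r_n^{\bot}\in\mathcal P_{fin}(\mathcal M)$ and $yr_n\in\mathcal M$; then $r_nyr_n=r_n(yr_n)\in\mathcal M$, so $D(r_nyr_n)=0$, while $D(r_n)=0$ because $r_n\in\mathcal M$, and the compression identity for a Jordan derivation at the projection $r_n$ (which follows from Lemma 3.1 by linearization) gives $D(r_nyr_n)=D(r_n)yr_n+r_nD(y)r_n+r_nyD(r_n)=r_nD(y)r_n$. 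Hence $r_nD(y)r_n=0$ for all $n$; since $r_n^{\bot}\downarrow 0$ with $r_n^{\bot}$ finite, Lemma 4.4 gives $r_n\to 1$ in $t(\mathcal M)$, and continuity of multiplication in the topological $*$-algebra $(LS(\mathcal M),t(\mathcal M))$ then yields $D(y)=\lim_n r_nD(y)r_n=0$. For arbitrary $x\in\mathcal A$ I would choose $z_n\in\mathcal P(\mathcal Z(\mathcal M))$ with $z_n\uparrow 1$ and $xz_n\in S(\mathcal M)$; since $z_n\in\mathcal M\subset\mathcal A$ we have $xz_n\in\mathcal A\cap S(\mathcal M)$, so $D(xz_n)=0$ by the previous step, while Lemma 3.1(3) gives $D(xz_n)=z_nD(x)$. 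Thus $z_nD(x)=0$ for every $n$, and since $z_nD(x)\to D(x)$ in $t(\mathcal M)$ (Proposition 4.1(1)) we conclude $D(x)=0$, i.e. $\delta_{\mathcal A}|_{\mathcal A}=\delta$.

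Finally, for uniqueness I would argue that if $\delta_1\colon LS(\mathcal M)\to LS(\mathcal M)$ is any Jordan derivation with $\delta_1|_{\mathcal A}=\delta$, then $\delta_1|_{\mathcal M}=\delta_0$, so $\delta_1|_{S(\mathcal M)}$ is a Jordan derivation from $S(\mathcal M)$ into $LS(\mathcal M)$ extending $\delta_0$; the uniqueness in Theorem 4.3 forces $\delta_1|_{S(\mathcal M)}=\hat\delta_0$, and then the uniqueness in Theorem 4.2 forces $\delta_1=\widetilde{\hat\delta_0}=\delta_{\mathcal A}$. The main obstacle is the middle step: the construction and the uniqueness are formal consequences of the two preceding extension theorems, but proving that a Jordan derivation on $\mathcal A$ vanishing on $\mathcal M$ must be identically zero genuinely requires the interplay between the compression identity $D(ryr)=D(r)yr+rD(y)r+ryD(r)$ for projections $r\in\mathcal M$ and the $t(\mathcal M)$-continuity of multiplication, and one must keep track that all the elements involved ($r_n$, $r_nyr_n$, $xz_n$) indeed lie in $\mathcal A$ so that $D$ is defined on them.
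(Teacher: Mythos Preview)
Your proposal is correct and follows essentially the same route as the paper: restrict $\delta$ to $\mathcal M$, apply the two extension theorems in succession to obtain $\delta_{\mathcal A}=\widetilde{\hat\delta_0}$, then verify agreement on $\mathcal A$ by first handling $\mathcal A\cap S(\mathcal M)$ via the compression identity $D(ryr)=D(r)yr+rD(y)r+ryD(r)$ and the $t(\mathcal M)$-convergence $r_n\to 1$, and afterwards passing to general $x\in\mathcal A$ via central projections $z_n$ and Lemma~3.1(3). The paper compresses your middle step into a single sentence, referring back to the uniqueness argument of the $\mathcal M\to S(\mathcal M)$ extension theorem (where the same compression computation with spectral projections $E_n(|y|)$ appears), whereas you spell it out directly with the difference $D=\delta_{\mathcal A}|_{\mathcal A}-\delta$ and arbitrary projections $r_n$ from the definition of $S(\mathcal M)$; both executions are equivalent.
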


\begin{proof}
Since $\mathcal M\subset\mathcal A$, the restriction $\delta_0$ of the Jordan derivation
$\delta$ on $\mathcal M$ is a well-defined Jordan derivation from $\mathcal M$
into $LS(\mathcal M)$. By Theorems 4.2 and 4.7, we know that
$\delta_{\mathcal A}=\tilde{\hat{\delta}}$
is a unique Jordan derivation from $LS(\mathcal M)$ into itself
such that
$\delta_{\mathcal A}(x)=\delta_0(x)$
for every $x$ in $\mathcal M$.

Next we show that
$$\delta_{\mathcal A}(a)=\delta(a)$$
for every $a$ in $\mathcal A$.
Let $a$ be in $\mathcal A$, there exists a
sequence $\{z_n\}_{n=1}^\infty\subset\mathcal P(\mathcal Z(\mathcal M))$,
such that $z_n\uparrow1$ and $az_n\in S(\mathcal M)$ for every $n\in\mathbb{N}$.
By Lemma 4.1 (1), we have that
$z_n\xrightarrow[n\rightarrow\infty]{t(\mathcal M)}1$. By Lemma 3.2 (2), we can obtain that
$$\delta_{\mathcal A}(a)=t(\mathcal M)-{\lim_{n \to \infty}\delta_{\mathcal A}(a)z_n}=t(\mathcal M)-{\lim_{n \to \infty}\delta_{\mathcal A}(az_n)}.$$
Similarly, we have that $\delta(a)=t(\mathcal M)-{\lim_{n \to \infty}\delta(az_n)}$.

Since $\delta_{\mathcal A}(x)=\delta_0(x)=\delta(x)$ for every $x$ in $\mathcal M$
and by the proof of uniqueness of the derivation $\hat{\delta}$ from Proposition 4.7,
we can obtain that $\delta_{\mathcal A}(az_n)=\delta(az_n)$ for every $n$ in $\mathbb{N}$,
it implies that $\delta_{\mathcal A}(a)=\delta(a)$ for every $a$ in $\mathcal A$.
\end{proof}

By Theorems 3.4 and 4.7, we have the following corollary.

\begin{corollary}
Suppose that $\mathcal{M}$ is a properly infinite von Neumann algebra,
$\mathcal A$ is a subalgebra of $LS(\mathcal M)$ such that $\mathcal M\subset\mathcal A$.
If $\delta$ is a Jordan derivation from $\mathcal A$ into $LS(\mathcal M)$, then $\delta$
is continuous with respect to the local measure topology $t(\mathcal M)$.
\end{corollary}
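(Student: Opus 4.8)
The plan is to deduce the corollary by combining the automatic continuity of Jordan derivations on $LS(\mathcal M)$ in the properly infinite case with the extension theorem of this section. Since $\mathcal M\subseteq\mathcal A\subseteq LS(\mathcal M)$, Theorem~4.7 --- the extension of a Jordan derivation from a subalgebra $\mathcal A\supseteq\mathcal M$ up to a Jordan derivation on all of $LS(\mathcal M)$ --- produces a Jordan derivation $\delta_{\mathcal A}$ from $LS(\mathcal M)$ into itself with $\delta_{\mathcal A}(a)=\delta(a)$ for every $a\in\mathcal A$. Recall how $\delta_{\mathcal A}$ is built: one restricts $\delta$ to $\mathcal M$, extends it to a Jordan derivation $\hat\delta$ on $S(\mathcal M)$ by approximating $x\in S(\mathcal M)$ with $xp_n$ where $p_n\uparrow1$ and $p_n^\bot\in\mathcal P_{fin}(\mathcal M)$ (Lemmas~4.3--4.5), then extends $\hat\delta$ to $\tilde{\hat\delta}$ on $LS(\mathcal M)$ by approximating with $xz_n$ where $z_n\in\mathcal P(\mathcal Z(\mathcal M))$ and $z_n\uparrow1$ (Proposition~4.1 and Theorem~4.2); the equality $\delta_{\mathcal A}|_{\mathcal A}=\delta$ on all of $\mathcal A$ follows from the $\mathcal Z(\mathcal M)$-linearity of Lemma~3.2 together with the uniqueness built into the construction.

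Next, because $\mathcal M$ is properly infinite, Theorem~3.4 applies directly to the Jordan derivation $\delta_{\mathcal A}\colon LS(\mathcal M)\to LS(\mathcal M)$ and gives that it is continuous with respect to the local measure topology $t(\mathcal M)$. Finally, $\mathcal A$ carries the subspace topology inherited from $(LS(\mathcal M),t(\mathcal M))$, the codomain $(LS(\mathcal M),t(\mathcal M))$ is unchanged, and $\delta=\delta_{\mathcal A}|_{\mathcal A}$; hence $\delta$ is continuous, being the restriction of a continuous map to a subspace. This is the assertion.

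I do not expect a genuine obstacle: all the analytic content already resides in Theorem~3.4 (the discontinuity-by-contradiction argument of Section~3, which uses that $\mathcal M$ is properly infinite) and in the extension machinery of Section~4, so the corollary is just a matter of composing these. The only points deserving a sentence of care are that the extension theorem is available for an \emph{arbitrary} subalgebra $\mathcal A$ with $\mathcal M\subseteq\mathcal A$ (no $*$-invariance or closedness of $\mathcal A$ is required), and that ``$\delta$ is continuous with respect to $t(\mathcal M)$'' here means continuity of $\delta$ for the subspace topology on $\mathcal A$ --- which is exactly what restricting $\delta_{\mathcal A}$ delivers.
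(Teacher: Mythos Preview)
Your proposal is correct and matches the paper's own argument: the corollary is stated immediately after Theorem~4.7 with the one-line justification ``By Theorems 3.4 and 4.7,'' which is exactly the composition you describe (extend $\delta$ to a Jordan derivation on $LS(\mathcal M)$ via Theorem~4.7, then invoke Theorem~3.4 for its continuity, and restrict). Your additional remarks about the subspace topology and the absence of $*$-invariance hypotheses on $\mathcal A$ are accurate and harmless elaborations.
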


\bibliographystyle{amsplain}

\begin{thebibliography}{99}

\bibitem{Albeverio1} S. Albeverio, S. Ayupov, K. Kudaybergenov.
Derivations on the algebra of measurable operators affiliated with a type I von Neumann algebra.
Siberian Adv. Math., 2008, 18: 86-94.

\bibitem{Albeverio2} S. Albeverio, S. Ayupov, K. Kudaybergenov.
Structure of derivations on various algebras of measurable operators for type I von Neumann algebras.
J. Func. Anal., 2009, 256: 2917-2943.

\bibitem{Alizadeh} R. Alizadeh. Jordan derivations of full matrix algebras.
Linear Algebra Appl., 2009, 430: 574-578.

\bibitem{Ber1} A. Ber, V. Chilin, F. Sukochev.
Non-trivial derivation on commutative regular algebras.
Extracta Math., 2006, 21: 107-147.

\bibitem{Ber3} A. Ber, V. Chilin, F. Sukochev.
Continuity of derivations of algebras of locally measurable operators.
Integr. Equ. Oper. Theory, 2013, 75: 527-557.

\bibitem{Ber2} A. Ber, V. Chilin, F. Sukochev.
Continuous derivations on algebras of locally measurable operators are inner.
Proc. London Math. Soc., 2014, 109: 65-89.

\bibitem{An} G. An, Y. Ding, J. Li. Characterizations of Jordan left derivations on some algebras.
Banach J. Math. Anal., 2016, 10: 466-481.

\bibitem{M. Bresar2} M. Bre\v{s}ar. Jordan derivations on semiprime rings.
Bull. Aust. Math. Soc., 1988, 104: 1003-1006.

\bibitem{M. Bresar J. Vukman1} M. Bre\v{s}ar, J. Vukman. Jordan derivations on prime rings.
Bull. Aust. Math. Soc., 1988, 37: 321-322.

\bibitem{M. Bresar J. Vukman} M. Bre\v{s}ar, J. Vukman.
On left derivations and related mappings. Proc. Amer. Math. Soc., 1990, 11:, 7-16.

\bibitem{Cuntz} J. Cuntz. On the continuity of Semi-Norms on operator algebras.
Math. Ann., 1976, 220: 171-183.

\bibitem{Cusack} J. Cusack. Jordan derivations on rings.
Proc. Amer. Math. Soc., 1975, 53: 321-324.

\bibitem{Q. Deng} Q. Deng. On Jordan left derivations. Math. J. Okayama Univ., 1992, 34: 145-147.

\bibitem{hejazian} S. Hejazian, A. Niknam. Modules, annihilators and module derivations of $JB^{*}$-algebras.
Indian J. Pure Appl. Math., 1996, 27: 129-140.

\bibitem{I. Herstein} I. Herstein, Jordan derivations of prime rings.
Proc. Amer. Math. Soc., 1957, 8: 1104-1110.

\bibitem{Johnson} B. Johnson. Symmetric amenability and the nonexistence of Lie and Jordan derivations.
Math. Proc. Cambd. Philos. Soc., 1996, 120: 455-473.

\bibitem{M. Muratov} M. Muratov, V. Chilin. Algebras of measurable and locally measurable operators.
Kyiv, Pratse In-ty matematiki NAN ukraini., 2007, 69, 390 pp, (Russian).

\bibitem{R.Kadison J. Ringrose} R. Kadison, J. Ringrose, Fundamentals of the Theory of Operator Algebras,
I, Pure Appl. Math. 100, Academic Press, New York, 1983.

\bibitem{Jian kui Li Jiren Zhou} J. Li, J. Zhou.
Jordan left derivations and some left derivable maps. Oper. Matrices, 2010, 4: 127-138.

\bibitem{Ringrose} J. Ringrose. Automatically continuous of derivations of operator algebras. J. London Math. Soc., 1972, 5: 432-438.

\bibitem{sakai} S. Sakai. Derivations of $W^{*}$-algebras. Ann. Math., 1966, 83: 273--279.

\bibitem{Segal} I. Segal. A non-commutative extension of abstract integration. Ann, Math., 1953, 57: 401-457.

\bibitem{M. Takesaki} M. Takesaki. Theory of operator algebras I, New York, Springer-Verlag, 1979.

\bibitem{J. Vukman1} J. Vukman. On left Jordan derivations of rings and Banach algebras. Aequations Math., 2008, 75: 260-266.

\bibitem{F. Yeadon} F. Yeadon. Convergence of measurable operators. Proc. Camb. Phil. Soc,, 1973, 74: 257-268.
\end{thebibliography}

\end{document}